\newtheorem{theo}{{Th\'eor\`eme}}[section]
\newtheorem{coro}[theo]{{Corollaire}}
\newtheorem{lemma}[theo]{{Lemme}}
\newtheorem{prop}[theo]{Proposition}
\theoremstyle{remark}
\newtheorem{remark}[theo]{\textbf{Remarque}}
\theoremstyle{definition}
\newtheorem{defn}[theo]{D\'efinition}
\newtheorem{recette}[theo]{\textbf{Recette}}
\newcommand{\ra}{\rightarrow}
\newcommand{\ol}{\overline}
\newcommand{\immouv}[1][r]
   {\ar@{}[#1] |*[o][F]{\hbox{%
         \vrule width 1.5mm height 0pt depth 0pt%
         \vrule width 0pt height .75mm depth .75mm%
         }}
         \ar@{^{(}->}[#1]}
\newcommand{\cC}{\mathcal{C}}
\newcommand{\cG}{\mathcal{G}}
\newcommand{\cH}{\mathcal{H}}
\newcommand{\cK}{\mathcal{K}}
\newcommand{\cM}{\mathcal{M}}
\newcommand{\cO}{\mathcal{O}}
\newcommand{\cP}{\mathcal{P}}
\newcommand{\cU}{\mathcal{U}}
\newcommand{\A}{\mathbb A}
\newcommand{\C}{\mathbb C}
\newcommand{\B}{\mathbb B}
\newcommand{\N}{\mathbb N}
\newcommand{\Q}{\mathbb Q}
\newcommand{\R}{\mathbb R}
\newcommand{\Z}{\mathbb Z}
\newcommand{\bM}{\mathbf M}
\newcommand{\bR}{\mathbf R}
\newcommand{\fD}{\mathfrak D}
\newcommand{\fK}{\mathfrak K}
\newcommand{\rH}{\mathrm H}
\newcommand{\m}{\mathrm m}
\newcommand{\rP}{\mathrm P}
\renewcommand{\ker}{\mathrm {Ker} }
\DeclareMathOperator{\coker}{\mathrm Coker }
\renewcommand{\Re}{\mathrm Re}
\DeclareMathOperator{\GL}{\mathrm GL}
\DeclareMathOperator{\SL}{\mathrm SL}
\DeclareMathOperator{\LC}{\mathrm LC}
\DeclareMathOperator{\Gal}{\mathrm Gal}
\DeclareMathOperator{\Sym}{\mathrm{Sym}}
\DeclareMathOperator{\plim}{\varprojlim}
\newcommand{\alg}{\mathrm{alg}}
\newcommand{\con}{\mathrm{cong}}
\newcommand{\cycl}{\mathrm{cycl}}
\newcommand{\dR}{\mathrm{dR}}
\newcommand{\Inf}{\mathrm{inf}}
\newcommand{\res}{\mathrm{res}}
\newcommand{\z}{\zeta}
\begin{document}

\title{Une loi de réciprocité explicite pour le polylogarithme elliptique\footnote{2010 Mathematics Subject Classification. 11G55, 11G40, 11F41, 11F85}}
\author{Francesco Lemma, Shanwen Wang\footnote{Le deuxième auteur est financé par le projet ANR ArSHiFo.}}
\date{}
\maketitle

\begin{abstract}
On démontre une compatibilité entre la réalisation $p$-adique et la réalisation de de Rham des sections de torsion du profaisceau polylogarithme elliptique. La preuve utilise une variante pour $\rH^1$ de la loi de réciprocité explicite de Kato pour le $\rH^2$ des courbes modulaires.\end{abstract}
\def\abstractname{Abstract}
\begin{abstract} We prove a compatibility between the $p$-adic realization and the de Rham realization of the torsion sections of the elliptic polylogarithm prosheaf.  The proof uses a new explicit reciprocity law for $\rH^1$, which is a variant of Kato's explicit reciprocity law for $\rH^2$ of the modular curves.  
\end{abstract}
\tableofcontents

\section{Introduction et Notations }
\subsection{Introduction}
Ce travail est motiv\'e par l'\'etude des valeurs sp\'eciales des fonctions $L$ motiviques. Des conjectures tr\`es g\'en\'erales de Beilinson et Bloch-Kato donnent une interpr\'etation cohomologique de ces nombres complexes en termes de cohomologie motivique et de r\'egulateurs. Comme la cohomologie motivique est au jour d'aujourd'hui incalculable en g\'en\'eral, la strat\'egie utilis\'ee dans tous les cas o\`u l'on a pu r\'esoudre, ou "presque", ces conjectures est de construire explicitement des classes de cohomologie motivique particuli\`eres dont le r\'egulateur est calculable, puis d'utiliser des techniques automorphes.

 Le symbole d'Eisenstein, construction due \`a Beilinson, fournit des classes de cohomologie motivique non-triviales sur les produits fibr\'es de la courbe elliptique universelle sur une courbe modulaire. Ces classes interviennent de mani\`ere plus ou moins directe dans beaucoup de d\'emonstrations, ou d'approches, de cas particuliers des conjectures de Beilinson et Bloch-Kato (formes modulaires elliptiques, caract\`eres de Dirichlet, formes modulaires de Hilbert sur un corps quadratique r\'eel, formes modulaires de Siegel de genre $2$...). Soyons plus pr\'ecis: soit $K \subset \mathrm{GL}_2(\mathbb{A}_f)$ un sous-groupe ouvert compact net et soit $Y_K$ le courbe modulaire de niveau $K$. On a la courbe elliptique universelle $\pi: E_K \longrightarrow Y_K$. Soit $t$ une section de torsion de $\pi$. Pour tout $n$ entier positif ou nul, la classe d'Eisenstein de poids $n+2$ associ\'ee \`a $t$ est un \'el\'ement
$$
\mathrm{Eis}_t^{n+2} \in H^1_\mathcal{M}(Y_K, \mathrm{Sym}^n \mathcal{H}(1))
$$
o\`u $\mathcal{H}$ d\'esigne le faisceau motivique $\underline{\mathrm{Hom}}(R^1\pi_* \mathbb{Q}(0), \mathbb{Q}(0))$ et o\`u $H^1_\mathcal{M}(Y_K, \mathrm{Sym}^n \mathcal{H}(1))$ d\'esigne la cohomologie motivique de $Y_K$ \`a coefficients dans $\mathrm{Sym}^n \mathcal{H}(1)$. Notons d\`es \`a pr\'esent que pour $K$ le sous-groupe principal de congruence de niveau $N$, et pour $t$ bien choisie, la classe $\mathrm{Eis}_t^0$ est l'image de l'unit\'e de Siegel sous l'application de Kummer intervenant dans la construcation du syst\`eme d'Euler de Kato (cf. \cite{PC1},\cite{KK},\cite{Wang}...). Ceci sera rendu pr\'ecis dans le pr\'esent travail. Comme nous l'avons mentionn\'e plus haut, pour obtenir des applications aux valeurs sp\'eciales de fonctions $L$, il est crucial de d\'ecrire aussi explicitement que possible l'image par les r\'egulateurs (de Beilinson, Betti, $l$-adique, syntomique, de Rham...) de ces classes de cohomologie motivique. La description de l'image des classes d'Eisenstein par le r\'egulateur de Beilinson a \'et\'e donn\'ee par Beilinson lui-m\^eme, et la r\'ealisation de Betti s'en d\'eduit formellement. La r\'ealisation $l$-adique a \'et\'e d\'etermin\'ee par Kings (\cite{GK} Thm. 4.2.9), la r\'ealisation syntomique et de de Rham a \'et\'e d\'etermin\'ee par Bannai-Kings (\cite{BK} Prop. 3.8). 

Le but de cet article est de donner une d\'emonstration de la description des classes d'Eisenstein en cohomologie de de Rham enti\`erement diff\'erente de celle de Bannai-Kings et de placer ce r\'esultat dans un cadre plus g\'en\'eral. En effet, nous utilisons le langage de Colmez des distributions alg\'ebriques pour encoder les relations de distribution satisfaites par les symboles d'Eisenstein de niveaux diff\'erents, ainsi qu'une nouvelle loi de r\'eciprocit\'e explicite (voir le théorème \ref{principal} pour un énoncé précis). De manière un peu plus précise, on définit deux distributions algébriques\footnote{G. Kings \cite{GK1} donne une perspective similaire sur les classe d'Eisenstein $p$-adiques en utilisant le language des faisceaux de modules d'Iwasawa.} (voir \S 2 et \S 3 respectivement) sur un espace localement profini $X^{(p)}$ à valeurs dans l'algèbre des formes modulaires et dans la cohomologie étale de courbe modulaire à coefficient dans $W_k(1)$ respectivement, où $W_k=\Sym^{k-2}V_p$ avec $V_p=\Q_pe_1\oplus \Q_p e_2$ la représentation standard de $\GL_2(\Z_p)$, et on montre qu'elles sont reliées par l'application $\exp^*$, où $\exp^*$ est une variante de l'application exponentielle duale de Kato, dont la formulation est très semblable à celle de Kato revisitée par Colmez \cite{PC1} (cf. aussi \cite{Wang}) mais pour $\rH^1$ au lieu de $\rH^2$.

 \subsection{Notations}
On note $\overline\Q$ la cl\^oture alg\'ebrique de $\Q$ dans $\C$ et on fixe, pour tout nombre premier~$p$, une cl\^oture
alg\'ebrique $\overline\Q_p$ de $\Q_p$, ainsi qu'un plongement de
$\overline\Q$ dans $\overline\Q_p$.

Si $N\in\N$, on note $\zeta_N$ la racine $N$-i\`eme
$e^{2i\pi/N}\in\overline\Q$ de l'unit\'e. On note
$\Q^{\rm cycl}$ l'extension cyclotomique de $\Q$,
r\'eunion des $\Q(\zeta_N)$, pour $N\geq 1$, ainsi que $\Q^{\rm cycl}_p$ l'extension cyclotomique de $\Q_p$, r\'eunion de $\Q_p(\z_N)$, pour $N\geq 1$.
 
 \subsubsection*{Objets ad\'eliques}
 Soient $\cP$ l'ensemble des premiers de $\Z$ et $\hat{\Z}$ le compl\'et\'e profini de $\Z$, alors
$\hat{\Z}=\prod_{p\in\cP}\Z_p$. Soit $\A_f=\Q\otimes\hat{\Z}$ l'anneau
des ad\`eles finies de $\Q$. Si
$x\in\A_f$, on note $x_p$ (resp. $x^{]p[}$) la
composante de $x$ en $p$ (resp. en dehors de $p$). Notons
$\hat{\Z}^{]p[}=\prod_{l\neq p}\Z_l$. On a donc
$\hat{\Z}=\Z_p\times\hat{\Z}^{]p[}$. Cela induit les d\'ecompositions
suivantes: pour tout  $d\geq 1$,
\[
\bM_d(\A_f)=\bM_d(\Q_p)\times\bM_d(\Q\otimes\hat{\Z}^{]p[})
\text{ et }
\GL_d(\A_f)=\GL_d(\Q_p)\times\GL_d(\Q\otimes\hat{\Z}^{]p[}).\]
\subsubsection*{Actions de groupes}
Soient $X$ un espace topologique localement profini, $V$ un
$\Z$-module. On note $\LC_c(X,V)$ le module des fonctions localement
constantes sur  $X$ \`a valeurs dans $V$ dont le support est compact
dans $X$. On note $\fD_{\alg}(X,V)$ l'ensemble des distributions alg\'ebriques sur
$X$ \`a valeurs dans $V$, c'est-\`a-dire, des applications
$\Z$-lin\'eaires de $\LC_c(X,\Z)$ \`a valeurs dans $V$. On note $\int_X\phi\mu$ la valeur de $\mu$
sur $\phi$, o\`u $\mu\in\fD_{\alg}(X,V)$ et $\phi\in \LC_c(X,\Z)$.

Soit $G$ un groupe localement profini, agissant contin\^ument \`a
droite sur $X$ et $V$. On munit $\LC_c(X,\Z)$ et
$\fD_{\alg}(X,V)$ d'actions de $G$ \`a droite comme suit: si $g\in G, x\in X,\phi\in\LC_c(X,\Z), \mu\in\fD_{\alg}(X,V),$ alors
\begin{equation}\label{actiondis} (\phi*g)(x)=\phi(x*g^{-1}) \text{ et } \int_{X}\phi(\mu*g)=\bigl(\int_{X}(\phi*g^{-1})\mu\bigr)*g.
\end{equation}

\subsubsection*{Formes modulaires}
Soient $A$ un sous-anneau de $\C$ et $\Gamma$ un sous-groupe
d'indice fini de $\SL_2(\Z)$. On note $\cM_k(\Gamma,\C)$ le
$\C$-espace vectoriel des formes modulaires de poids $k$ pour
$\Gamma$. On note aussi $\cM_{k}(\Gamma,A)$ le sous $A$-module de
$\cM_k(\Gamma,\C)$ des formes modulaires dont le $q$-d\'eveloppement
est \`a coefficients dans $A$. On pose
$\cM(\Gamma,A)=\oplus_{k=0}^{+\infty}\cM_k(\Gamma,A)$. Et on note
$\cM_k(A)$ (resp. $\cM(A)$) la r\'eunion des $\cM_k(\Gamma,A)$
(resp. $\cM(\Gamma,A)$), o\`u $\Gamma$ d\'ecrit tous les
sous-groupes d'indice fini de $\SL_2(\Z)$. On note $L^\con$ l'ensemble des sous-groupes de congruence. On d\'efinit de m\^eme:
\[\cM^{\con}_k(A)=\bigcup\limits_{\substack{\Gamma \in L^\con}}\cM_k(\Gamma,A)\text{ et }
\cM^{\con}(A)=\oplus_{k=0}^{+\infty}\cM_k^{\con}(A).\]



Soit $K$ un sous-corps de $\C$ et soit $\ol{K}$ la cl\^oture alg\'ebrique de $K$ dans $\C$. On note $\Pi_K$ le groupe des
automorphismes de $K$-algèbres graduées $\cM(\ol{K})$ sur $\cM(\SL_2(\Z),K)$; c'est un
groupe profini. Si $K$ est alg\'ebriquement clos et si $\Gamma$ est un sous-groupe distingu\'e d'indice fini de $\SL_2(\Z)$, alors le groupe des automorphismes de $\cM(\Gamma,K)$ sur $\cM(\SL_2(\Z),K)$ est $\SL_2(\Z)/\Gamma$. On en déduit que $\Pi_K=\widehat{\SL_2(\Z)}$, o\`u $\widehat{\SL_2(\Z)}$ est le compl\'et\'e profini de $\SL_2(\Z)$. Dans le cas g\'en\'eral, on dispose d'une suite exacte:
\[1\ra\Pi_{\ol{K}}\ra\Pi_K\ra\cG_K\ra1,\]
qui admet une section $\cG_K\ra\Pi_K$ naturelle,  en faisant agir $\cG_K$ sur les
coefficients du $q$-d\'eveloppement des formes modulaires.
Le groupe des automorphimes d'algèbres de $\cM^{\con}(\Q^{\cycl})$ sur $\cM(\SL_2(\Z),\Q^{\cycl})$ est le groupe profini $\SL_2(\hat{\Z})$, compl\'et\'e de $\SL_2(\Z)$ par rapport aux sous-groupes de congruence. D'autre part, quel que soit $f\in\cM^{\con}(\Q^{\cycl})$, le groupe $\cG_{\Q}$ agit sur les coefficients du $q$-d\'eveloppement de $f$ \`a travers son quotient $\Gal(\Q^{\cycl}/\Q)$ qui est isomorphe \`a $\hat{\Z}^{*}$ par le caract\`ere cyclotomique. La sous-alg\`ebre $\cM^{\con}(\Q^{\cycl})$ est stable par $\Pi_{\Q}$ qui agit \`a travers $\GL_2(\hat{\Z})$ et on a le diagramme commutatif de groupes suivant (cf. par exemple \cite[théorème 2.2]{Wang}):

\[\xymatrix{
1\ar[r]&\Pi_{\bar{\Q}}\ar[r]\ar[d]&\Pi_{\Q}\ar[r]\ar[d]&\cG_{\Q}\ar[r]\ar[d]^{\chi_{\cycl}}\ar@{.>}@/^/[l]^{\iota_\Q}&1\\
1\ar[r]&\SL_{2}(\hat{\Z})\ar[r]&\GL_2(\hat{\Z})\ar[r]^{\det}&\hat{\Z}^{*}\ar[r]\ar@{.>}@/^/[l]^{\iota}&1                     },\]
où la section $\iota_\Q$ de $\cG_{\Q}$ dans $\Pi_{\Q}$ d\'ecrite plus haut envoie $u\in \hat{\Z}^{*}$ sur la matrice $(\begin{smallmatrix}1&0\\0&u\end{smallmatrix})\in \GL_2(\hat{\Z})$.

\subsection{Remerciements}

Nous sommes heureux de remercier Pierre Colmez pour ses remarques. La plupart de ce travail a été effectué pendant le séjour du deuxième auteur à l'université de Padoue. Il a bénéficié de discussions intéressantes avec Matteo Longo et René Scheider. Une grande partie de cet article a été rédigée pendant ses séjours à l'IMJ et à l'IHES. Il souhaite remercier ces institutions pour lui avoir fourni d'excellentes conditions de travail.

\section{La distribution
$z_{u,\mathrm{Eis,dR}}(k)$}\label{section1}

\subsection{S\'eries d'Eisenstein-Kronecker }\label{EK}
Les r\'esultats de ce paragraphe peuvent se trouver dans le livre de Weil $\cite{Weil}$.

\begin{defn}Si $(\tau,z)\in\cH\times\C$, on pose $q=e^{2i\pi\tau}$ et
$q_z=e^{2i\pi z}$. On introduit l'op\'erateur
$\partial_{z}:=\frac{1}{2i\pi}\frac{\partial}{\partial
z}=q_z\frac{\partial}{\partial q_z}$. On pose aussi  $e(a)=e^{2i\pi
a}$. Si $k\in\N$, $\tau\in\cH$, et $z,u\in\C$, on définit la s\'erie d'Eisenstein-Kronecker par
\[\rH_k(s,\tau,z,u)=\frac{\Gamma(s)}{(-2i\pi)^k}(\frac{\tau-\bar{\tau}}{2i\pi})^{s-k}\sideset{}{'}\sum_{\omega\in\Z+\Z\tau}\frac{\overline{\omega+z}^k}{|\omega+z|^{2s}}e(\frac{\omega\bar{u}-u\bar{\omega}}{\tau-\bar{\tau}}).\]
Elle converge pour $\mathrm{Re}(s)>1+\frac{k}{2}$, et poss\`ede un
prolongement
 m\'eromorphe \`a tout le plan complexe avec des p\^oles simples
en $s=1$ (si $k=0$ et $u\in\Z+\Z\tau$) et $s=0$ (si $k=0$
et $z\in\Z+\Z\tau$). Dans la formule ci-dessus $\sideset{}{'}\sum$
signifie (si $z\in\Z+\Z\tau$) que l'on supprime le terme
correspondant \`a $\omega=-z$. 
\end{defn}
Si $k\geq 1$, on d\'efinit les fonctions suivantes:
\begin{align*}
E_k(\tau,z)=\rH_k(k,\tau,z,0),\,  \, \, F_k(\tau,z)=\rH_k(k,\tau,0,z).
\end{align*}
Les fonctions $E_k(\tau,z)$ et $F_k(\tau,z)$ sont p\'eriodiques
en $z$ de p\'eriode $\Z\tau+\Z$. De plus on a:
\[E_{k+1}(\tau,z)=\partial_z E_k(\tau,z), \text{ si } k\in\N \text{ et } E_0(\tau,z)=\log|\theta(\tau,z)| \text{ si }z\notin \Z+\Z\tau,\]
o\`u $\theta(\tau,z)$ est donn\'ee par le produit infini:
\[\theta(\tau,z)=q^{1/12}(q_z^{1/2}-q_z^{-1/2})\sideset{}{}\prod_{n\geq 1}((1-q^n q_z)(1-q^nq_z^{-1})).\]

Soient $(\alpha,\beta)\in (\Q/\Z)^2$ et $(a,b)\in\Q^2$ qui a pour image
$(\alpha,\beta)$ dans $(\Q/\Z)^2$. Si $k=2$ et $(\alpha,\beta)\neq(0,0)$, ou si $k\geq 1$ et $k\neq 2$,  on d\'efinit des séries d'Eisenstein, éléments de $\cM^{\con}(\Q^{\cycl})$:
\[ E_{\alpha,\beta}^{(k)}=E_k(\tau,a\tau+b) \text{ et } F_{\alpha,\beta}^{(k)}=F_k(\tau,a\tau+b).\]
Si $k=2$ et $(\alpha,\beta)=(0,0)$, on d\'efinit\footnote{La s\'erie $H_2(s,\tau,0,0)$ converge pour $\Re(s)>2$, mais pas pour $s=2$.} $E_{0,0}^{(2)}=F_{0,0}^{(2)}:=\lim_{s\ra 2}H_2(s,\tau,0,0).$

\subsection{La distribution $z_{u,\mathrm{Eis,dR}}(k)$}\label{distributionEis}
 Soient $X=\A_f^2, G=\GL_2(\hat{\Z})$ et $V=\cM_k^{\con}(\Q^{\cycl})$. On définit une action de $G$ à droite sur $X$ par la multiplication de matrices:
 \[x*\gamma= (a,b)\gamma, \text{ si } x=(a,b) \text{ et }\gamma\in \GL_2(\hat{\Z}).\] 
L'action de $\Pi_\Q$ stabilise $V$ et se factorise à travers $\GL_2(\hat{\Z})$; l'action de $\SL_2(\hat{\Z})$ est l'action modulaire usuelle $|_k$ et celle de $(\begin{smallmatrix}1&0\\0&d\end{smallmatrix})$, si $d\in \hat{\Z}^*$, se fait via un rel\`evement $\sigma_d$ dans $\cG_{\Q}$ agissant sur les coefficients du $q$-d\'eveloppement.


La proposition suivante est une traduction des relations de distribution \cite[lemme 2.6]{Wang} pour les séries d'Eisenstein $F^{(k)}_{\alpha,\beta}$.
\begin{prop}\cite[théorème 2.13]{Wang}\label{eiskj} Si $k\geq 1$, il existe une distribution alg\'ebrique 
\[z_{\mathrm{Eis, dR}}(k)\in
\fD_{\alg}(\A_f^2,\cM_k^{\con}(\Q^{\cycl}))\] v\'erifiant:
quels que soient $r\in\Q^*$ et $(a,b)\in\Q^2$, on a
\[
\int_{(a+r\hat{\Z})\times(b+r\hat{\Z})}z_{\mathrm{Eis,dR}}(k)=r^{k-2}F^{(k)}_{r^{-1}a,r^{-1}b}.
\]
De plus, si $\gamma\in\GL_2(\hat{\Z})$, alors
$z_{\mathrm{Eis,dR}}(k)*\gamma=z_{\mathrm{Eis,dR}}(k).$
\end{prop}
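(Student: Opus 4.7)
Le plan est de construire $z_{\mathrm{Eis,dR}}(k)$ comme une forme $\Z$-linéaire sur $\LC_c(\A_f^2,\Z)$ en prescrivant ses valeurs sur les indicatrices des pavés compacts ouverts basiques de $\A_f^2$, puis de vérifier séparément la cohérence de ces prescriptions (pour obtenir une distribution algébrique) et l'équivariance sous $\GL_2(\hat{\Z})$. Les pavés basiques sont les
\[U_{a,b,r} := (a+r\hat{\Z}) \times (b+r\hat{\Z}),\quad (a,b) \in \Q^2,\ r \in \Q^{*},\]
dont les indicatrices engendrent $\Z$-linéairement $\LC_c(\A_f^2,\Z)$. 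On pose, conformément à la formule à démontrer, $z_{\mathrm{Eis,dR}}(k)(\mathbf{1}_{U_{a,b,r}}) := r^{k-2} F^{(k)}_{r^{-1}a,\, r^{-1}b}$, et il s'agit alors de vérifier deux compatibilités ensemblistes.

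La première est l'indépendance vis-à-vis du choix du représentant rationnel $(a,b)$ : comme $\Q \cap \hat{\Z} = \Z$, deux représentants rationnels du même pavé diffèrent dans $r\Z^2$, et $F^{(k)}_{\alpha,\beta}$ ne dépend que de $(\alpha,\beta) \bmod \Z^2$. La seconde, véritablement substantielle, concerne les raffinements : pour tout entier $m\geq 1$ on dispose de la décomposition disjointe
\[U_{a,b,r} = \bigsqcup_{i,j=0}^{m-1} U_{a+ir,\, b+jr,\, rm},\]
et l'additivité requise revient à l'identité
\[r^{k-2}\, F^{(k)}_{r^{-1}a,\, r^{-1}b} \;=\; (rm)^{k-2} \sum_{i,j=0}^{m-1} F^{(k)}_{(rm)^{-1}(a+ir),\, (rm)^{-1}(b+jr)},\]
c'est-à-dire, en posant $(\alpha,\beta)=(r^{-1}a,r^{-1}b)$, à $F^{(k)}_{\alpha,\beta} = m^{k-2}\sum_{i,j=0}^{m-1} F^{(k)}_{(\alpha+i)/m,\, (\beta+j)/m}$. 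Il s'agit précisément de la relation de distribution pour les séries d'Eisenstein $F^{(k)}$ établie au lemme 2.6 de \cite{Wang}. Ces deux points fournissent bien une distribution $z_{\mathrm{Eis,dR}}(k) \in \fD_{\alg}(\A_f^2,\cM_k^{\con}(\Q^{\cycl}))$ satisfaisant la formule annoncée.

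Pour l'invariance $z_{\mathrm{Eis,dR}}(k) * \gamma = z_{\mathrm{Eis,dR}}(k)$ sous $\gamma \in \GL_2(\hat{\Z})$, on applique la définition~(\ref{actiondis}) de l'action tordue en testant sur $\mathbf{1}_{U_{a,b,r}}$, avec $r$ choisi assez divisible pour que $\mathbf{1}_{U_{a,b,r}} * \gamma^{-1}$ demeure une combinaison finie d'indicatrices de pavés basiques à représentants rationnels (c'est possible par approximation forte de $\Q^2$ dans $\A_f^2$). L'action sur $\LC_c$ produit alors une somme finie de valeurs $F^{(k)}$ à des indices adéliques décalés par $\gamma^{-1}$, tandis que l'action sur les coefficients modulaires $V = \cM_k^{\con}(\Q^{\cycl})$ se décompose, via le diagramme commutatif rappelé dans l'introduction, en l'action modulaire $|_k$ pour $\SL_2(\hat{\Z})$ et l'action galoisienne $\sigma_d$ pour les matrices diagonales $(\begin{smallmatrix}1&0\\0&d\end{smallmatrix})$. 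La coïncidence entre ces deux actions est la loi de transformation classique des $F^{(k)}_{\alpha,\beta}$ sous $\GL_2(\hat{\Z})$ opérant sur les paramètres $(\alpha,\beta) \in (\Q/\Z)^2$, elle aussi contenue dans \cite{Wang}. La principale difficulté est d'ordre combinatoire : il faut organiser soigneusement les représentants adéliques et indexer correctement les cosets pour que les identités adéliques se traduisent terme à terme en les relations classiques satisfaites par les séries $F^{(k)}$ ; le contenu analytique étant déjà contenu dans \cite{Wang}, la proposition s'obtient en définitive comme un empaquetage adélique de ces données en un objet global sur $\A_f^2$.
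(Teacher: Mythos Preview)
Votre démarche est correcte et coïncide exactement avec celle du papier, qui ne donne pas de preuve détaillée mais indique que la proposition est \og une traduction des relations de distribution \cite[lemme 2.6]{Wang} pour les séries d'Eisenstein $F^{(k)}_{\alpha,\beta}$\fg{} et renvoie à \cite[théorème 2.13]{Wang}. Vous avez précisément explicité ce que signifie cette traduction : la relation de distribution assure la cohérence sous raffinement des pavés, et la loi de transformation des $F^{(k)}_{\alpha,\beta}$ sous $\GL_2(\hat{\Z})$ fournit l'invariance.
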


Soit $\langle\cdot\rangle: \Z_p^{*}\ra \hat{\Z}^{*}$ l'inclusion naturelle envoyant $x$ sur $\langle x\rangle=(1,\cdots,x,1,\cdots)$, o\`u $x$ est \`a la place $p$. D'après \cite[Proposition 2.14]{Wang}, si $u\in \Z_p^*$, il existe une distribution 
\[z_{u,\mathrm{Eis,dR}}(k)= (u^2-\langle u\rangle)z_{\mathrm{Eis,dR}}(k)\] caractérisée par le fait que, quels que soient $r\in\Q^{*}$ et $(a,b)\in\Q^2$, on a
\[  \int_{(a+r\hat{\Z})\times(b+r\hat{\Z})}z_{u,\mathrm{Eis,dR}}(k)=\frac{1}{(k-2)!}r^{k-2}F^{(k)}_{u,r^{-1}a,r^{-1}b},    \]
où $F^{(k)}_{u,\alpha,\beta}= u^2F_{\alpha,\beta}^{(k)}-u^{2-k}F^{(k)}_{<u>\alpha,<u>\beta}$.
\begin{remark} Bannai-Kings \cite[Proposition 3.8]{BK} ont déterminé la réalisation de de Rham de la classe d'Eisenstein (cf. l'introduction). Ils la notent $\mathrm{Eis}_{\dR}^{k}(\varphi)$ avec $\varphi\in \cC^0((\Z/N)^2, \Q)$. Si $\phi$ est la transformée de Fourier de $\varphi$, vue comme une fonction localement constante 
sur $\A_f^2$, un calcul direct en passant aux $q$-développements, montre que\footnote{La normalisation vient de la relation de distribution.},  
\[\frac{1}{2}N^{k-2}\mathrm{Eis}_{\dR}^{k}(\varphi)=\int \phi z_{\mathrm{Eis,dR}}(k).\] 
Par ailleurs, R. Scheider \cite{RS} a obtenu une description de la réalisation de de Rham du polylogarithme elliptique et en déduit une description explicite de $\mathrm{Eis}_{\dR}^{k}(\varphi)$ compatible avec les autres.
\end{remark}

\section{Classe d'Eisenstein $p$-adique}\label{section2}
La realisation $p$-adique des classes d'Eisenstein, appelées classes d'Eisenstein $p$-adiques, a été déterminée par Kings \cite[theorem 4.2.9]{GK} par une construction géométrique. On donne une description purement algébrique des classes d'Eisenstein $p$-adiques, qui est compatible avec celle de Kings (cf. remarque \ref{compKings}).  

\subsection{Unités de Siegel}

Soit $K$ un sous corps de $\C$. On note $\cU(\Gamma, K)$ le groupe des unit\'es modulaires pour $\Gamma$ dont le $q$-d\'eveloppement est \`a coefficients dans $K$.
 On note $\cU(K)$ (resp.
$\cU^{\con}(K)$) la r\'eunion des $\cU(\Gamma,K)$, o\`u $\Gamma$ d\'ecrit tous les
sous-groupes d'indice fini (resp. de congruence) de $\SL_2(\Z)$.


Si $(\alpha,\beta)\in(\Q/\Z)^2$, $(c,6)=1$ et $(c\alpha,c\beta)\neq(0,0)$, on note $g_{c,\alpha,\beta}$ l'unité de Siegel définie par la formule:
\[g_{c,\alpha,\beta}= \frac{\theta^{c^2}(\tau, \tilde{\alpha}\tau+\tilde{\beta})}{\theta(\tau, c\tilde{\alpha}\tau+c\tilde{\beta})}, \text{ avec } (\tilde{\alpha},\tilde{\beta})\in \Q^2 \text{ qui a pour image } (\alpha,\beta)\in (\Q/\Z)^2.\] Elle ne dépend pas du choix de $(\tilde{\alpha},\tilde{\beta})\in \Q^2$ et appartient à $\cU^{\con}(\Q^{\cycl})$.
On note  $g_{\alpha,\beta}=g_{c,\alpha,\beta}^{1/(c^2-1)}\in \Q\otimes\cU(\overline{\Q})$, qui ne d\'epend pas du choix de $c\equiv1\mod N$. De plus, pour tout $c$, on a
$g_{c,\alpha,\beta}=g_{\alpha,\beta}^{c^2}g_{c\alpha,c\beta}^{-1}.$



 L'action de $\GL_2(\hat{\Z})$ sur $\cM^{\con}(\Q^{\cycl})$ induit une action de $\GL_2(\hat{\Z})$ sur $\Q\otimes \cU^{\con}(\Q^{\cycl})$. Soit $(\alpha,\beta)\in (\Q/\Z)^2$ et soit $\gamma\in \GL_2(\hat{\Z})$. Alors on a $g_{\alpha,\beta}*\gamma=g_{(\alpha,\beta)*\gamma}$, où  $(\alpha,\beta)*\gamma=(a\alpha+c\beta, b\alpha+d\beta)$ est le produit de matrices usuel avec $\gamma=(\begin{smallmatrix}a& b\\ c& d\end{smallmatrix})$.

Les relations de distribution des unités de Siegel se traduisent en l'énoncé suivant:
\begin{prop}\cite[Théorème 2.21]{Wang}Il existe une distribution alg\'ebrique 
\[z_{\mathrm{Siegel}}\in\fD_{\alg}(\A_f^2-(0,0),\Q\otimes\cU(\Q^{\cycl})),\]
telle que, quels que soient $r\in\Q^*$ et $(a,b)\in\Q^2-(r\Z,r\Z)$, on
ait:
\[\int_{(a+r\hat{\Z})\times(b+r\hat{\Z})}z_{\mathrm{Siegel}}=g_{r^{-1}a,r^{-1}b}(\tau).\]
De plus, $z_{\mathrm{Siegel}}$ est invariante sous l'action de
$\Pi_{\Q}$.
\end{prop}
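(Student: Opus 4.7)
The plan is to specify the distribution on characteristic functions of basic compact-open subsets and then verify the compatibility and equivariance. The basic compact-open subsets of $\A_f^2 - \{(0,0)\}$ are $U_{a,b,r} := (a + r\hat{\Z}) \times (b + r\hat{\Z})$ with $r \in \Q^*$ and $(a,b) \in \Q^2 - (r\Z)^2$, and $\LC_c(\A_f^2 - \{(0,0)\}, \Z)$ is generated by their characteristic functions modulo the additivity relations coming from partitions $U_{a,b,r} = \bigsqcup_{i,j=0}^{n-1} U_{a+ir,\,b+jr,\,nr}$ for $n \in \N^*$. I would set $\mu(\mathbf{1}_{U_{a,b,r}}) := g_{r^{-1}a,\,r^{-1}b}$, viewed in $\Q \otimes \cU(\Q^{\cycl})$ with $\cU$ written additively; this is well-defined since $r^{-1}a \bmod \Z$ depends only on the coset $a + r\hat{\Z}$, and the condition $(a,b) \notin (r\Z)^2$ ensures $(r^{-1}a, r^{-1}b) \neq (0,0)$ in $(\Q/\Z)^2$, so that $g_{r^{-1}a, r^{-1}b}$ is defined.

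The main step is to verify the distribution relation. With $\alpha = r^{-1}a,\ \beta = r^{-1}b$, the indices $(nr)^{-1}(a+ir) = \alpha/n + i/n$ and $(nr)^{-1}(b+jr) = \beta/n + j/n$ of the pieces $U_{a+ir,b+jr,nr}$ range exactly over the $n^2$ preimages of $(\alpha,\beta)$ under multiplication by $n$ on $(\Q/\Z)^2$. The required equality is the classical multiplicative identity
\[g_{\alpha, \beta} = \prod_{\substack{(\alpha', \beta') \in (\Q/\Z)^2 \\ n(\alpha', \beta') = (\alpha, \beta)}} g_{\alpha', \beta'} \quad \text{in } \Q \otimes \cU(\Q^{\cycl}).\]
I would prove this by first establishing the analogue for the integral units $g_{c,\alpha,\beta}$, which follows from the product formula for $\theta(\tau,z)$ and the classical distribution identity $\prod_{(i,j) \in (\Z/n)^2}\theta(\tau, z + (i\tau+j)/n) = C(\tau, z)\cdot\theta(\tau, nz)$, for an explicit nonzero factor $C(\tau,z)$, applied to both numerator and denominator of $g_{c,\alpha,\beta}$; the $C$-factors then cancel between the two (modulo the $c^2$-th power on the numerator). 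The relation $g_{c,\alpha,\beta} = g_{\alpha,\beta}^{c^2}g_{c\alpha,c\beta}^{-1}$ finally descends the identity to $g_{\alpha,\beta}$, since $c^2 - 1$ is invertible in $\Q$.

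Once this compatibility is verified, $\mu$ extends uniquely by $\Z$-linearity to an element $z_{\mathrm{Siegel}} \in \fD_{\alg}(\A_f^2 - \{(0,0)\}, \Q \otimes \cU(\Q^{\cycl}))$. For $\Pi_\Q$-invariance I would split along the diagram of the Notations: on the $\SL_2(\hat{\Z})$-part, the equivariance $g_{\alpha,\beta} * \gamma = g_{(\alpha,\beta)*\gamma}$ noted just above the proposition combines with $U_{a,b,r} \cdot \gamma = U_{(a,b)\gamma,\, r}$ (which holds since $\gamma \in \GL_2(\hat{\Z})$ preserves $r\hat{\Z}^2$) to yield $z_{\mathrm{Siegel}} * \gamma = z_{\mathrm{Siegel}}$; on the $\hat{\Z}^*$-part, $d$ mapping via $\iota$ to $(\begin{smallmatrix}1 & 0 \\ 0 & d\end{smallmatrix})$, one checks $\sigma_d(g_{\alpha,\beta}) = g_{\alpha, d\beta}$ directly on $q$-expansions, matching the action $(a,b)\mapsto(a,bd)$ on cosets. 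The main obstacle is the distribution relation for Siegel units, which is however classical and ultimately reduces to the product formula for $\theta$.
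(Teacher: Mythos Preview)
The paper does not give an in-line proof of this proposition: it is stated with the citation \cite[Théorème 2.21]{Wang} and prefaced by the sentence ``Les relations de distribution des unités de Siegel se traduisent en l'énoncé suivant'', so the intended argument is precisely the one you sketch --- define $z_{\mathrm{Siegel}}$ on the basic boxes $U_{a,b,r}$ by $g_{r^{-1}a,r^{-1}b}$, and check that the additivity constraint coming from the partition of $U_{a,b,r}$ into $n^2$ sub-boxes is exactly the classical distribution relation $\prod_{n(\alpha',\beta')=(\alpha,\beta)} g_{\alpha',\beta'}=g_{\alpha,\beta}$ for Siegel units; your treatment of the $\Pi_\Q$-invariance via the $\GL_2(\hat\Z)$-equivariance $g_{\alpha,\beta}*\gamma=g_{(\alpha,\beta)*\gamma}$ recorded just before the proposition is likewise the intended one.

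One small caveat on execution: your justification of the distribution relation via a raw product formula $\prod_{i,j}\theta(\tau,z+(i\tau+j)/n)=C(\tau,z)\,\theta(\tau,nz)$ with an unspecified factor $C$ that ``cancels'' is not quite the right shape --- the genuine identity relates $\prod_{i,j}\theta(\tau,z+(i\tau+j)/n)$ to $\theta(\tau,nz)$ only up to a factor depending on $\tau$ alone (not on $z$), and it is this $z$-independence, together with the fact that $g_{c,\alpha,\beta}$ is a ratio of values of $\theta(\tau,\cdot)^{c^2}/\theta(\tau,c\,\cdot)$, that makes the extraneous factor drop out. Alternatively one checks the relation directly on the infinite product for $g_{\alpha,\beta}$ (as in Kubert--Lang or in \cite{Wang}). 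Either way the strategy is correct and matches what the paper defers to \cite{Wang}.
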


\subsection{Th\'eorie de Kummer $p$-adique}
Soit $G$ un groupe localement profini. Soit $X$ un espace topologique
localement profini muni d'une action continue de $G$ \`a droite. Soit $M$ un $G$-module topologique muni d'une action \`a droite de
$G$. On note $\rH^{i}(G,M)$ le $i$-i\`eme groupe de cohomologie
continue de $G$ \`a valeurs dans $M$. 

Notons $Z^{0}=\{(x_n)_{n\in\N}|
x_n\in\cU(\overline{\Q}),(x_{n+1})^p=x_n\}$.  La topologie sur $\cU(\ol{\Q})$ est discr\`ete et on munit $Z^0$ de la topologie de la limite projective. 
Notons $Z=\Q\otimes Z^0$; $Z$ est muni d'une action de
$\Pi_{\Q}$ composante par composante.

On d\'efinit une projection  $\theta$ de $Z^0$ sur $\cU(\ol{\Q})$ en envoyant $(x_n)_{n\in\N}$ sur $x_0$.
 La projection $\theta:
Z^0\ra \cU(\ol{\Q})$ est surjective, et son noyau est
\[\ker(\theta)=\{(1,\z_p,\z_{p^{n}},...\z_{p^{n}},...)\}\cong\Z_p(1).\]
Autrement dit, on a la suite exacte de $\Pi_{\Q}$-modules topologiques:
\[0\ra\Z_p(1)\ra Z^0\ra\cU(\overline{\Q})\ra 0.\]

Dans la suite, on pose $X=\A_f^2-(0,0)$. Comme $\Q$ est plat sur $\Z$, on obtient la suite exacte de
$\Pi_{\Q}$-modules topologiques:
\[0\ra\fD_{\alg}(X,\Q_p(1))\ra \fD_{\alg}(X, Z\otimes\Q)\ra\fD_{\alg}(X,\cU(\overline{\Q})\otimes_\Z\Q)\ra 0.\]
En prenant la cohomologie continue de $\Pi_{\Q}$, on obtient une application de connexion "de Kummer":
\begin{equation*}
\rH^{0}(\Pi_{\Q},\fD_{\alg}(X,\cU(\overline{\Q})\otimes_\Z\Q))
\xrightarrow{\delta}\rH^{1}(\Pi_{\Q},\fD_{\alg}(X,\Q_p(1))).
\end{equation*}
On note $z_{\mathrm{Siegel}}^{(p)}\in \rH^{1}(\Pi_{\Q},\fD_{\alg}(X,\Q_p(1)))$ l'image de $z_{\mathrm{Siegel}}$ sous l'application de Kummer.  

\subsection{Torsion à la Soulé}\label{gcd}
Soit $G$ un groupe localement profini, agissant continûment à droite sur $X$. Soit $V$ une $\Q_p$-représentation de $G$ à droite. On note $\cC_c^0(X, V)$ le $\Q_p$-espace des fonctions continues à support compact sur $X$ à valeurs dans $V$ et $\fD_0(X,V)$ le $\Q_p$-espace des mesures sur $X$ à valeurs dans $V$.  On munit $\cC_c^0(X,V)$ et $\fD_0(X,V)$ d'actions de $G$ à droite comme suit:
si $g\in G$, $x\in X$,  $\phi(x)\in\cC_c^0(X,V)$, et $\mu\in\fD_0(X,V)$, alors 
\[ \phi*g(x)= \phi(x*g^{-1})*g \text{ et } \int_X \phi(x)(\mu*g)=\left(\int_X(\phi*g^{-1}) \mu\right)*g. \]
La proposition suivant est la "Torsion à la Soulé", utilisée aussi dans la construction de système d'Euler de Kato.
\begin{prop}\cite[proposition 2.17]{Wang1} Si $f\in\cC_c^0(X,V)^G $, alors la multiplication d'une mesure $\mu\in\fD_0(X,\Z_p)$ par la fonction $f$ induit un morphisme $G$-équivariant à droite de $\fD_0(X,\Z_p)$ dans $\fD_0(X, V)$. 
\end{prop}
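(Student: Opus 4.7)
The strategy is to construct the multiplication map $\mu \mapsto f\mu$ by the obvious formula, then to verify $G$-equivariance by a direct chase of definitions. The main conceptual point is the interplay between the two ways in which $G$ acts: on arguments in $X$ and on values in $V$.

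First I would define, for $\mu \in \fD_0(X, \Z_p)$, the element $f\mu \in \fD_0(X, V)$ by
$$\int_X \phi \, (f\mu) := \int_X (\phi f)\, \mu, \qquad \phi \in \cC_c^0(X, \Z_p).$$
The right hand side is meaningful because $\phi f$ is a compactly supported continuous $V$-valued function, and a $\Z_p$-valued measure $\mu$ extends canonically to integrate such functions ($V$-valued step functions are integrated termwise, followed by passage to uniform limits on a compact containing the support). The resulting map $\mu \mapsto f\mu$ is $\Z_p$-linear and continuous, so it indeed defines a morphism $\fD_0(X, \Z_p) \to \fD_0(X, V)$.

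Next I would check $G$-equivariance, i.e.\ $(f\mu)*g = f \cdot (\mu * g)$ for all $g \in G$. Unfolding the definition of the $G$-action on $\cC_c^0(X, V)$, the invariance $f * g = f$ is equivalent to $f(x * g^{-1}) = f(x) * g^{-1}$. Since $\phi$ is $\Z_p$-valued, the $G$-action on its values is trivial, whence the key identity
$$(\phi f) * g^{-1} \;=\; (\phi * g^{-1}) \cdot f \qquad \text{in } \cC_c^0(X, V).$$
Substituting this into the definition~(\ref{actiondis}) of the $G$-action on $\fD_0$ yields
$$\int_X \phi\bigl((f\mu) * g\bigr) = \Bigl(\int_X (\phi * g^{-1})\, f\, \mu\Bigr) * g = \Bigl(\int_X \bigl((\phi f) * g^{-1}\bigr) \mu\Bigr) * g = \int_X (\phi f)(\mu * g) = \int_X \phi\bigl(f \cdot (\mu * g)\bigr),$$
which is the desired equality.

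The only (minor) obstacle is the bookkeeping of the different $G$-actions, together with the fact that the $\Z_p$-valued measure $\mu$ must be silently extended to integrate $V$-valued continuous compactly supported functions in a $G$-compatible way. Once this is spelled out, the proof is essentially formal and no deeper input is required.
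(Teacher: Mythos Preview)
Your proof is correct. The paper does not give its own proof of this proposition: it is quoted verbatim from \cite[proposition~2.17]{Wang1} and used as a black box, so there is nothing to compare against. Your argument is the natural one---define $f\mu$ by $\int_X \phi\,(f\mu)=\int_X(\phi f)\,\mu$ and verify equivariance via the identity $(\phi f)*g^{-1}=(\phi*g^{-1})\cdot f$, which unwinds directly from $f*g=f$---and the only delicate point, namely that the extension of a $\Z_p$-valued measure to $V$-valued integrands remains $G$-compatible, you have flagged correctly.
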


D'après \cite[lemme 2.24]{Wang}, si $u\in \Z_p^*$, il existe un opérateur $r_u=u^2-\langle u\rangle$ tel que la distribution algébrique $z_{u,\mathrm{Siegel}}^{(p)}=r_uz_{\mathrm{Siegel}}^{(p)}$ appartienne à $\rH^1(\Pi_\Q, \fD_{\alg}(X,\Z_p(1)))$, et donc s'étende par continuité en une mesure. Ceci nous permet d'utiliser la "torsion à la Soulé".

On note $V_p=\Q_pe_1\oplus \Q_p e_2$ la représentation standard de $\GL_2(\Z_p)$ à droite donnée par les formules:
\[\text{ si } \gamma=(\begin{smallmatrix}a&b\\ c&d \end{smallmatrix}),  e_1*\gamma=ae_1+be_2 \text{ et } e_2*\gamma= ce_1+de_2. \]
On note $W_k=\Sym^{k-2}V_p$. Alors, la multiplication par la fonction $x\mapsto \frac{1}{(k-2)!}(ae_1+be_2)^{k-2}$, où $x_p=(a,b)$ est la composante à la place $p$ de $x$, induit un morphisme naturel:
\[\rH^1(\Pi_\Q, \fD_0(X,\Z_p(1)) )\ra \rH^1(\Pi_\Q, \fD_0(X, W_k(1))).\]
 On note $X^{(p)}=(\A_f^{]p[})^2\times (\Z_p^2-p\Z_p^2)$.
On note $z_{u,\mathrm{Eis},\text{ét}}(k)\in \rH^1(\Pi_{\Q},\fD_{0}(X^{(p)},W_k(1)))$ la restriction à $X^{(p)}$ de la mesure
\[(x\mapsto \frac{1}{(k-2)!}(ae_1+be_2)^{k-2})\otimes z^{(p)}_{u,\mathrm{Siegel}}.\]  On appelle $z_{u,\mathrm{Eis},\text{ét}}(k)$ la classe d'Eisenstein $p$-adique.
 
\begin{remark}\label{compKings}On vérifie facilement, en revenant aux définitions, que notre construction redonne celle de Kings \cite[theorem 4.2.9]{GK}. 
Plus précisément, si $(N,p)=1$, on a 
\begin{equation*}
\begin{split}
&\int_{(\alpha+N\hat{\Z})\times(\beta+N\hat{\Z})}z_{u,\mathrm{Eis},\text{ét}}(k)\\
=&\lim_{n\ra\infty} \sum_{\substack{(a,b)\equiv (\alpha,\beta)\mod N\\ 1\leq a,b\leq Np^n}}(ae_1+be_2)^{k-2}\delta(g_{u, a/Np^n, b/Np^n})\\
=&N^{k-2}\plim_n \sum_{\substack{(a,b)\equiv (\alpha,\beta)\mod N\\ 1\leq a,b\leq Np^n}}(a/Np^ne_1+b/Np^ne_2)^{k-2}\delta(g_{u, a/Np^n, b/Np^n}),
\end{split}
\end{equation*} 
où la limite projective $\plim_n \sum_{(a,b)\equiv (\alpha,\beta)\mod N}(a/Np^ne_1+b/Np^ne_2)^{k-2}\delta(g_{u, a/Np^n, b/Np^n})$ est la classe d'Eisenstein $p$-adique de Kings.
\end{remark}

\section{Une loi de r\'eciprocit\'e explicite }

\subsection{La méthode de Tate-Sen-Colmez}
Dans ce paragraphe, on rappelle les résultats de $\cite[\S 3, \S 4]{Wang}$ sur la méthode de Tate-Sen-Colmez pour l'anneau $\fK^+=\Q_p\{q/p\}$ des fonctions analytiques sur la boule $v_p(q)\geq 1$ \`a coefficients dans $\Q_p$. 
\subsubsection*{L'anneau $\fK^+$ et ses extensions }L'anneau $\fK^+$ est un anneau principal, complet pour la valuation $v_{p}$ d\'efinie par la formule:
\[ v_{p}(f)=\inf_{n\in\N}v_p(a_n), \text{ si } f=\sum_{n\in\N}a_n(q/p)^n\in \fK^+ .\]
On a $v_p(fg)=v_p(f)+v_p(g)$, ce qui permet de prolonger $v_p$ au corps des fractions de $\fK^+$ et on note $\fK$ son compl\'et\'e. Fixons une cl\^oture alg\'ebrique $\overline{\fK}$ de $\fK$ munie de la valuation $v_p$ qui est le prolongement unique de $v_p$ sur $\fK$ à $\overline{\fK}$. On note $\cG_{\fK}$ le groupe de Galois de $\overline{\fK}$ sur $\fK$.

Soit $M\geq 1$ un entier. On note $q_M$ (resp. $\z_M$) la racine
$M$-i\`eme $q^{1/M}$ (resp. $\exp(\frac{2i\pi}{M})$) de $q$ (resp.
$1$). On note $F_M=\Q_p[\z_M]$. Soit $\fK_M=\fK[q_{M},\z_M]$ ; c'est une extension galoisienne de $\fK$ de groupe de Galois $(\begin{smallmatrix}1 ,& \Z/M\Z\\ 0, &(\Z/M\Z)^*\end{smallmatrix})$.

On note $\fK_{\infty}$ (resp. $F_\infty$) la r\'eunion des $\fK_M$ (resp. $F_M$) pour tous $M\geq 1$. On note $P_{\Q_p}$ (resp. $P_{\ol{\Q}_p}$) le groupe de Galois de $\ol{\Q}_p\fK_{\infty}$ sur $\fK$ (resp. $\ol{\Q}_p\fK$) . Le groupe $P_{\ol{\Q}_p}$ est un groupe profini qui est isomorphe au groupe $\hat{\Z}$. De plus, on a une suite exacte:
\[1\ra P_{\ol{\Q}_p}\ra P_{\Q_p}\ra \cG_{\Q_p}\ra 1.\]

Fixons $M$ un entier $\geq 1$. On note $\fK_{Mp^{\infty}}$ (resp. $F_{Mp^{\infty}}$) la r\'eunion des $\fK_{Mp^n}$ (resp. $F_{Mp^\infty}$) pour tous $n\geq 1$.
Soit $\ol{\fK}^{+}$ la cl\^oture int\'egrale  de $\fK^{+}$ dans $\ol{\fK}$. On note $\fK_M^+$ la cl\^oture int\'egrale de $\fK^+$ dans $\fK_M$ et $\fK_\infty^+=\cup_M\fK_M^+$.

En associant son $q$-d\'eveloppement \`a une forme modulaire, on voit les formes modulaires comme des \'el\'ements de $\ol{\Q}_p\fK_{\infty}^+$.
Le groupe de Galois $P_{\Q_p}$ de $\bar{\Q}_p\fK_{\infty}$ sur $\fK$  pr\'eserve l'alg\`ebre des formes modulaires $\cM(\ol{\Q})$; autrement dit, $P_{\Q_p}$ est un sous-groupe de $\Pi_{\Q}$.


\subsubsection*{Les anneaux de Fontaine}
Soit $L$ un anneau de caract\'eristique $0$ muni d'une
valuation $v_p$ telle que
 $v_{p}(p)=1.$ On note $\cO_L=\{x\in L, v_p(x)\geq 0\}$ l'anneau
des entiers de $L$ pour la topologie $p$-adique. On note $\cO_{\C(L)}$ le compl\'et\'e  de $\cO_{L}$ pour la valuation $v_p$. On pose $\C(L)=\cO_{\C(L)}[\frac{1}{p}]$.

Soit $\R(L)=\plim A_n=\{(x_n)_{n\in \N}|
x_n\in \cO_L/p\cO_L \text{ et } x_{n+1}^p=x_n, \text{ si }n\in \N \}$.
Si $x=(x_n)_{n\in\N}\in\R(L)$, soit $\hat{x}_n$ un rel\`evement de
$x_n$ dans $\cO_{\C(L)}$. La suite $(\hat{x}_{n+k}^{p^k})$ converge quand
$k$ tend vers l'infini. Sa limite $x^{(n)}$ ne
d\'epend pas du choix des rel\`evements $\hat{x}_n$. On obtient ainsi
une bijection: $\R(L)\ra\{(x^{(n)})_{n\in\N}| x^{(n)}\in\cO_{\C(L)},
(x^{(n+1)})^p=x^{(n)}, \forall n\}$. 

L'anneau $\R(L)$ est un anneau parfait  de caract\'eristique $p$.
On note $\A_{\inf}(L)$ l'anneau des vecteurs de Witt \`a coefficients dans
$\R(L)$. Si $x\in \R(L)$, on note
$[x]=(x,0,0,...)\in \A_{\Inf}(L)$ son repr\'esentant de Teichm\"uller.
Alors tout \'el\'ement $a$ de $\A_{\inf}(L)$ peut s'\'ecrire de
mani\`ere unique sous la forme $\sum\limits_{k=0}^{\infty}p^k[x_k]$
avec une suite $(x_k)\in (\R(L))^{\N}$.

On d\'efinit un morphisme d'anneaux $\theta: \A_{\inf}(L)\ra\cO_{\C(L)}$
par la formule
$\sum_{k=0}^{+\infty}p^k[x_k]\mapsto
\sum_{k=0}^{+\infty}p^kx_k^{(0)}.$
 On note
$\B_{\inf}(L)=\A_{\inf}(L)[\frac{1}{p}]$,  et on \'etend $\theta$ en
un morphisme $\B_{\inf}(L)\ra \C(L).$
 On note $\B_m(L)=\B_{\inf}(L)/(\ker\theta)^m$. On fait de $\B_m(L)$ un anneau de Banach en prenant l'image de $\A_{\inf}(L)$ comme anneau d'entiers.

On d\'efinit $\B_{\dR}^{+}(L):=\plim \B_m(L)$ comme le compl\'et\'e
$\ker (\theta)$-adique de $\B_{\Inf}(L)$; on le munit de la topologie de la limite projective, ce qui en fait un anneau de Fr\'echet. Alors $\theta$ s'\'etend en
un morphisme continu d'anneaux topologiques
$\B_{\dR}^{+}(L)\ra \C(L)$. 

Pour simplifier la notation, on note $\A_{\inf}$ (resp. $\B_{\inf}$ et $\B_{\dR}^+$) l'anneau $\A_{\inf}(\ol{\fK}^+)$ (resp. $\B_{\inf}(\ol{\fK}^+)$ et $\B_{\dR}^+(\ol{\fK}^+)$).
Soit $\tilde{q}$ (resp. $\tilde{q}_M$ si $M\geq 1$ est un entier) le repr\'esentant de Teichm\"uller dans $\A_{\Inf}$ de $(q,
q_p,\cdots,q_{p^n},\cdots)$ (resp. $(q_M,\cdots,q_{Mp^n},\cdots)$). Si $M|N$, on a
$\tilde{q}_N^{N/M}=\tilde{q}_M$.

On d\'efinit une application continue $\iota_{\dR}:\fK^+\ra \B_{\dR}^+$ par $f(q)\mapsto f(\tilde{q}) $; 
ce qui permet d'identifier $\fK^+$ \`a un sous-anneau de $\B_{\dR}^+$. Mais il faut faire attention au fait que $\iota_{\dR}(\fK^+)$ n'est pas stable par $\cG_{\fK}$ car $\tilde{q}\sigma=\tilde{q}\tilde{\z}^{c_q(\sigma)}$ si
$\sigma\in\cG_{\fK}$, o\`u $c_{q}$ est le $1$-cocycle \`a valeur dans $\Z_p(1)$ associ\'e \`a $q$
par la th\'eorie de Kummer.

Posons
$\tilde{\fK}^+=\iota_{\dR}(\fK^+)[[t]]$.  Si $M\geq 1$ est un entier, on note $\tilde{\fK}^+_M$ l'anneau
$\tilde{\fK}^+[\tilde{q}_M,\tilde{\z}_M]$ et pose $\tilde{\fK}_{Mp^{\infty}}^+=\bigcup_{n}\tilde{\fK}^+_{Mp^n}$. L'application $\iota_{\dR}$ s'\'etend en un morphisme continu 
de $\fK^+$-modules $\iota_{\dR}:\fK^+_M\ra \B_{\dR}^{+}$ en envoyant $\z_M$ et $q_M$ sur $\tilde{\z}_M$ et $\tilde{q}_M$ respectivement. 
On a $\tilde{\fK}_M^+=\iota_{\dR}(\fK_M^+)[[t]]$.

On définit une application $\tilde{\fK}^+_M$-lin\'eaire de $\tilde{\fK}_{Mp^{\infty}}^+$ dans  $\tilde{\fK}_M^+$ par la formule:
\begin{equation*}
\begin{split}
\bR_M: \tilde{\fK}^+_{Mp^{\infty}}&\longrightarrow\tilde{\fK}^+_M \\
\tilde{\z}_{Mp^n}^{a}\tilde{q}_{Mp^n}^b &\mapsto
\left\{\begin{aligned}\tilde{\z}_{Mp^n}^{a}\tilde{q}_{Mp^n}^b& \empty ,
\text{ si } p^n|a \text{ et
} p^n|b;\\
0&   \empty, \text{ sinon}.
\end{aligned}\right.
\end{split}
\end{equation*}

\begin{prop}\cite[théorème 3.17, théorème 3.22]{Wang} Soit $M\geq 1$. On a \\
$(1)$
$\rH^0(\cG_{\fK_{Mp^{\infty}}},\B_{\dR}^{+})=\B_{\dR}^{+}(\fK_{Mp^{\infty}}^+)$;\\
$(2)$ $\tilde{\fK}_{Mp^{\infty}}^+$ est dense dans
$\B_{\dR}^{+}(\fK_{Mp^{\infty}}^+)$;\\
$(3)$ Si $M\geq 1$ est un entier tel que $m=v_p(M)\geq v_p(2p)$, l'application $\bR_M: \tilde{\fK}_{Mp^{\infty}}^+\ra \tilde{\fK}_M^+$ s'étend par continuité en une application $\tilde{\fK}^+_M$-lin\'eaire $\bR_M:\B_{\dR}^{+}(\fK_{Mp^{\infty}}^+)\ra\tilde{\fK}_M^+$. De plus, $\bR_M$ commute \`a l'action de $\cG_{\fK}$.
\end{prop}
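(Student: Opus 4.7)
The plan is to transpose Fontaine's Tate-Sen-Colmez machinery from the classical arithmetic setting (where the base ring is $\Q_p$ and one studies $\Q_p^{\cycl}$) to the present two-variable geometric setting (where the base is $\fK^+ = \Q_p\{q/p\}$ and one adjoins both $p$-power roots of unity and $p$-power roots of $q$). The three assertions mirror respectively: Fontaine's computation of $(\B_{\dR}^+)^{\cG_{\Q_p^{\cycl}}}$, the density of polynomial approximations in $\B_{\dR}^+$, and the continuity of the normalized trace / Colmez projector.

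For (1), I would first establish a Tate-type theorem in this geometric context: $\C(\ol{\fK}^+)^{\cG_{\fK_{Mp^\infty}}} = \C(\fK_{Mp^\infty}^+)$, via almost-étale descent along the perfectoid tower $\fK_{Mp^\infty}^+$. Then I filter $\B_{\dR}^+$ by powers of $\ker\theta$, obtaining short exact sequences
$$0 \to \C(\ol{\fK}^+)(m) \to \B_{m+1}^+(\ol{\fK}^+) \to \B_m^+(\ol{\fK}^+) \to 0,$$
and take $\cG_{\fK_{Mp^\infty}}$-invariants. By the analogue of Tate's $\rH^1$-vanishing (Tate's normalized trace arguments applied to the cyclotomic quotient of $\cG_{\fK_{Mp^\infty}}$, which is essentially $\Z_p$ acting by a nontrivial character on the twists), the connecting maps vanish, and a passage to the inverse limit yields $\B_{\dR}^+(\fK_{Mp^\infty}^+)$ as claimed. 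For (2), density of $\tilde{\fK}_{Mp^\infty}^+ = \iota_{\dR}(\fK_{Mp^\infty}^+)[[t]]$ is then essentially formal: given $x \in \B_{\dR}^+(\fK_{Mp^\infty}^+)$, reduce modulo $t^{n+1}$, write the image in $\C(\fK_{Mp^\infty}^+)[[t]]/(t^{n+1})$ in terms of its $\C$-coefficients, approximate each coefficient by an element of $\iota_{\dR}(\fK_{Mp^\infty}^+)$ using density of $\fK_{Mp^\infty}^+$ in its $p$-adic completion, and assemble the resulting Cauchy sequence in the Fréchet topology.

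For (3), the crucial point is continuity. On the dense subspace, $\bR_M$ is the explicit projector killing monomials $\tilde{\z}_{Mp^n}^a \tilde{q}_{Mp^n}^b$ unless $p^n$ divides both $a$ and $b$; this is precisely the two-variable analogue of the Tate normalized trace $p^{-n}\tr_{F_{Mp^n}/F_M}$. The extension by continuity amounts to a uniform $p$-adic estimate
$$v_p(\bR_M(x)) \geq v_p(x) - C(M)$$
on the dense subspace, with $C(M)$ a constant that stays bounded provided $v_p(M) \geq v_p(2p)$. This Tate-Sen hypothesis is exactly what guarantees that the characters of $P_{\ol{\Q}_p}$ appearing in the decomposition of $\tilde{\fK}_{Mp^\infty}^+$ satisfy the necessary near-triviality condition for the geometric series estimates to close. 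Once continuity is in hand, $\tilde{\fK}_M^+$-linearity and $\cG_{\fK}$-equivariance transfer by density from the explicit monomial formula, since the $\cG_{\fK}$-action permutes monomials $\tilde{\z}_{Mp^n}^a\tilde{q}_{Mp^n}^b$ while preserving the divisibility condition $p^n \mid a,b$ up to $\tilde{\fK}_M^+$-coefficients.

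The main obstacle is (3): establishing the Tate-Sen estimate requires careful control over the ramification in the two-variable tower $\fK \subset \fK_{Mp^\infty}$, in particular proving that the trace maps $\tr_{\fK_{Mp^{n+1}}/\fK_{Mp^n}}$ acquire the expected $p$-adic denominator. The bound $v_p(M) \geq v_p(2p)$ isolates the regime in which the Galois group of $\fK_{Mp^\infty}/\fK_M$ is pro-$p$ and acts by a character sufficiently close to the identity on $\tilde{q}_{Mp^n}$ and $\tilde{\z}_{Mp^n}$; outside this range additional $2$-torsion phenomena would obstruct the estimate.
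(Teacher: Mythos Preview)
The paper does not actually prove this proposition: it is quoted as théorèmes~3.17 and~3.22 of \cite{Wang} and used as a black box in the construction of $\exp^*$. So there is no in-paper argument to compare your sketch against; what one can say is that your outline---Ax--Sen--Tate for $\C(\overline{\fK}^+)$, dévissage along the $t$-adic filtration for~(1), approximation modulo $t^n$ for~(2), and a uniform $p$-adic bound on the normalized trace for~(3)---is precisely the Tate--Sen--Colmez strategy implemented in the cited reference, so in spirit you are on the same track as what the paper invokes.

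One small correction to your sketch of~(1): you appeal to ``Tate's $\rH^1$-vanishing for the cyclotomic quotient of $\cG_{\fK_{Mp^\infty}}$ acting by a nontrivial character on the twists,'' but $\fK_{Mp^\infty}$ already contains all $p$-power roots of unity, so the cyclotomic character is \emph{trivial} on $\cG_{\fK_{Mp^\infty}}$ and the graded pieces $\C(\overline{\fK}^+)(m)$ are untwisted as $\cG_{\fK_{Mp^\infty}}$-modules. The actual mechanism is the almost-étale descent identifying $\C(\overline{\fK}^+)^{\cG_{\fK_{Mp^\infty}}}$ with $\C(\fK_{Mp^\infty}^+)$ at each graded step, together with a direct comparison of the $\ker\theta$-adic filtrations on the two $\B_{\dR}^+$'s; no character-twist vanishing is needed at this stage (those arguments enter later, for the cohomology of $P_{\fK_M}$, not of $\cG_{\fK_{Mp^\infty}}$).
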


\begin{prop}\cite[proposition 4.19]{Wang}\label{Tatenorm} Soit $v_p(M)\geq v_p(2p)$; si $V$ est une
$\Q_p$-repr\'esentation de $P_{\fK_M}$ munie d'un $\Z_p$-r\'eseau
$T$ tel que $P_{\fK_{M}}$ agit trivialement sur $T/2pT$, alors pour
$i\in\N$, $\bR_M$ induit un isomorphisme:
\[\bR_M:\rH^{i}(P_{\fK_M},\B_{\dR}^{+}(\fK^+_{Mp^{\infty}})\otimes V)\cong\rH^{i}(P_{\fK_M},\tilde{\fK}^+_M\otimes V).\]
\end{prop}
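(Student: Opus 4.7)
\medskip

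\noindent\textbf{Plan de preuve.} La stratégie est d'appliquer la méthode de Tate-Sen-Colmez, c'est-à-dire de décomposer $\B_{\dR}^{+}(\fK^+_{Mp^{\infty}})$ en deux morceaux $P_{\fK_M}$-stables: l'image de $\bR_M$, qui est $\tilde{\fK}^+_M$ par la proposition précédente, et un "sous-espace de hautes fréquences" $X_M:=\ker(\bR_M)$ sur lequel toute la cohomologie de $P_{\fK_M}$ à coefficients tordus par $V$ s'annule. Le résultat se déduit alors formellement.

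\medskip

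\noindent\textbf{Étape 1.} D'après la proposition précédente, $\bR_M$ est $\tilde{\fK}^+_M$-linéaire et commute à l'action de $\cG_{\fK}$. Comme $\bR_M(1)=1$, la $\tilde{\fK}^+_M$-linéarité montre que $\bR_M$ est l'identité sur $\tilde{\fK}^+_M$, donc un idempotent. On obtient ainsi une décomposition $P_{\fK_M}$-équivariante
\[
\B_{\dR}^{+}(\fK^+_{Mp^{\infty}}) = \tilde{\fK}^+_M \oplus X_M.
\]
En tensorisant par $V$ (qui est un $\Q_p$-espace vectoriel), cette décomposition reste valide et $P_{\fK_M}$-stable. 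Il suffit donc de démontrer l'annulation
\[
\rH^{i}(P_{\fK_M}, X_M \otimes V) = 0 \quad \text{pour tout } i \in \N.
\]

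\medskip

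\noindent\textbf{Étape 2.} C'est l'étape principale et le c\oe ur technique de la preuve. On établit d'abord le cas $V=\Q_p$ (trivial): pour un générateur topologique $\gamma$ du pro-$p$ quotient de $P_{\fK_M}$, on montre que $(\gamma-1)$ est bijectif sur $X_M$, avec un inverse continu de norme opératorielle contrôlée (c'est ici que l'hypothèse $v_p(M)\geq v_p(2p)$ joue un rôle crucial, comme dans la construction même de $\bR_M$). Cela entraîne $\rH^{i}(P_{\fK_M}, X_M)=0$ pour $i \geq 1$ et $X_M^{P_{\fK_M}}=0$. On étend ensuite au cas général par un argument de déformation: grâce à l'hypothèse que $P_{\fK_M}$ agit trivialement sur $T/2pT$, tout $g \in P_{\fK_M}$ s'écrit $g = 1 + 2p\,\rho(g)$ sur $T$, ce qui rend convergente, sur $X_M \otimes T$, toute série géométrique construite à partir de l'inverse de $\gamma-1$. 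On peut alors construire par approximations successives une primitive pour tout cocycle continu à valeurs dans $X_M \otimes V$, prouvant ainsi l'annulation de $\rH^i$.

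\medskip

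\noindent\textbf{Étape 3.} Une fois l'annulation établie, la suite exacte courte scindée en Étape 1 donne une décomposition
\[
\rH^{i}(P_{\fK_M}, \B_{\dR}^{+}(\fK^+_{Mp^{\infty}}) \otimes V) \simeq \rH^{i}(P_{\fK_M}, \tilde{\fK}^+_M \otimes V) \oplus \rH^{i}(P_{\fK_M}, X_M \otimes V),
\]
et le second terme est nul. La projection $\bR_M$ induit le premier facteur comme projection, fournissant l'isomorphisme annoncé.

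\medskip

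\noindent\textbf{Principal obstacle.} Toute la difficulté se concentre dans l'Étape 2, à savoir l'inversibilité de $\gamma-1$ sur $X_M$ avec un contrôle quantitatif des normes, et l'argument d'approximation qui en déduit l'annulation de la cohomologie à coefficients tordus. C'est exactement l'analogue des "axiomes de Tate-Sen" vérifiés pour le corps $\fK$ dans \cite{Wang}; les estimations $p$-adiques pour $\bR_M$ et ses itérés (découlant de la formule explicite définissant $\bR_M$ sur les monômes $\tilde{\z}_{Mp^n}^a \tilde{q}_{Mp^n}^b$) sont le levier technique qui rend l'argument effectif.
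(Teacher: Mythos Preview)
The paper does not give its own proof of this proposition; it is stated with a bare citation to \cite[proposition 4.19]{Wang}. Your sketch follows the standard Tate--Sen--Colmez strategy (idempotent decomposition via $\bR_M$, then vanishing of cohomology on $X_M=\ker\bR_M$), which is indeed the method of the cited reference, so in substance there is nothing to compare.

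One imprecision worth flagging in your Étape~2: you speak of ``un générateur topologique $\gamma$ du pro-$p$ quotient de $P_{\fK_M}$'', but $P_{\fK_M}$ is already pro-$p$ under the hypothesis $v_p(M)\geq v_p(2p)$ and is a $p$-adic Lie group of dimension~$2$ (isomorphic to the group $\rP_m$ described just after this proposition in the paper), not a procyclic group. The actual argument therefore cannot rely on inverting a single $\gamma-1$; it proceeds by dévissage through the normal subgroup $U_m$ and the quotient $\Gamma_m\cong\Z_p$, showing that suitable generators of each act with invertible $(\gamma-1)$ on the relevant pieces of $X_M$, with the required norm bounds. Apart from this, your outline is correct and matches what the reference does.
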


\subsection{Cohomologie des repr\'esentations du groupe $\rP_{m}$}\label{tech}

Soit $M\geq 1$ tel que $v_p(M)=m\geq v_p(2p)$.
Le groupe de Galois $P_{\fK_M}$ de l'extension
$\fK_{Mp^{\infty}}/\fK_M$ est un groupe analytique $p$-adique compact de rang $2$,
isomorphe \`a
\[\rP_m=\{(\begin{smallmatrix}a& b\\ c& d\end{smallmatrix})\in\GL_2(\Z_p): a=1, c=0, b\in p^m\Z_p, d\in 1+p^m\Z_p \},\]
et si $u,v\in p^m\Z_p$, on note $(u,v)$ l'élément $(\begin{smallmatrix}1&u\\ 0& e^v\end{smallmatrix})$ de $\rP_m$. La loi de groupe s'écrit sous la forme 
\[(u_1,v_1)(u_2,v_2)=(e^{v_2}u_1+u_2, v_1+v_2).\]
Soient $U_m$ et $\Gamma_m$ les sous-groupes de $\rP_m$ topologiquement engendrés par $(p^m,0)$ et $(0,p^m)$ respectivement. Ces deux sous-groupes sont isomorphes à $\Z_p$. De plus,  $U_m$ est distingué dans $\rP_m$ et on a $\rP_m/U_m=\Gamma_m$.  Soit $V$ une $\Q_p$-représentation de $\rP_m$. La suite spectrale de Hochschild-Serre nous fournit une suite exacte:
\[0\ra \rH^1(\Gamma, V^{U_m})\ra \rH^1(\rP_m, V)\ra \rH^1(U_m, V)^{\Gamma_m}.\] 

\begin{lemma}On a un isomorphisme $\rH^1(U_m, V)^{\Gamma_m}\cong (V/(u_m-1))^{e^{-p^m}\gamma_m=1}$.
\end{lemma}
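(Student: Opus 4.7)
The plan is to identify $\rH^1(U_m, V)$ explicitly using the fact that $U_m \cong \Z_p$, and then to transport the $\Gamma_m$-action through the isomorphism. Since $U_m$ is topologically generated by $u_m = (p^m, 0)$, continuous cohomology of $U_m$ with coefficients in $V$ is computed by the two-term complex $V \xrightarrow{u_m - 1} V$, so evaluation at $u_m$ yields a canonical isomorphism
\[
\rH^1(U_m, V) \;\simto\; V/(u_m - 1)V, \qquad [c] \longmapsto c(u_m).
\]

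The key step is to compute the induced $\Gamma_m$-action on the right-hand side. For a continuous cocycle $c: U_m \to V$ and $\gamma_m = (0, p^m) \in \Gamma_m$, the usual formula for the action of $\Gamma_m = \rP_m/U_m$ on $\rH^1(U_m,V)$ reads $(\gamma_m \cdot c)(u) = \gamma_m \cdot c(\gamma_m^{-1} u\, \gamma_m)$. Using the group law $(u_1,v_1)(u_2,v_2) = (e^{v_2}u_1 + u_2, v_1+v_2)$ recalled just above, a direct computation gives
\[
\gamma_m^{-1}\, u_m\, \gamma_m \;=\; (e^{p^m} p^m,\, 0),
\]
which, viewed inside $U_m \cong \Z_p$, is $u_m^{e^{p^m}}$.

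Next I apply the cocycle relation to express $c(u_m^n)$ as $(1 + u_m + \cdots + u_m^{n-1})\, c(u_m)$, and pass to the limit for $n = e^{p^m} \in \Z_p$. Modulo $(u_m - 1)V$ each $u_m^i$ acts as the identity, so this reduces to $e^{p^m}\, c(u_m)$. Hence, in $V/(u_m - 1)V$,
\[
(\gamma_m \cdot c)(u_m) \;\equiv\; e^{p^m}\, \gamma_m \cdot c(u_m) \pmod{(u_m - 1)V},
\]
so the transported $\Gamma_m$-action on $V/(u_m - 1)V$ is given by the operator $e^{p^m}\gamma_m$.

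Taking invariants, $\rH^1(U_m, V)^{\Gamma_m}$ is the kernel of $e^{p^m}\gamma_m - 1$ on $V/(u_m - 1)V$, which is the same as the kernel of $e^{-p^m}\gamma_m - 1$, proving the claimed identification. The only real care required is the passage $c(u_m^{e^{p^m}}) \equiv e^{p^m} c(u_m)$ modulo $(u_m - 1)V$, which amounts to the continuity and $p$-adic convergence of the cocycle relation for a topological generator of $\Z_p$; this is the one technical point to check carefully, but it is standard.
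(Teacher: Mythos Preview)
Your approach---identify $\rH^1(U_m,V)$ with $V/(u_m-1)V$ via evaluation at the generator, then compute the conjugation action of $\Gamma_m$---is exactly the content of the reference the paper cites, so the strategy is fine. The problem is your last line: the kernel of $e^{p^m}\gamma_m-1$ is \emph{not} the same as the kernel of $e^{-p^m}\gamma_m-1$ (one is the $e^{-p^m}$-eigenspace for $\gamma_m$, the other the $e^{p^m}$-eigenspace), so you cannot pass from one to the other as you do.

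The slip is in the action convention. The paper works with \emph{right} modules, so the induced action of $\Gamma_m$ on $\rH^1(U_m,V)$ is $(c*\gamma_m)(u)=c(\gamma_m u\gamma_m^{-1})*\gamma_m$, and the conjugate you need is $\gamma_m u_m\gamma_m^{-1}$, not $\gamma_m^{-1}u_m\gamma_m$. With the group law $(u_1,v_1)(u_2,v_2)=(e^{v_2}u_1+u_2,\,v_1+v_2)$ one finds
\[
\gamma_m u_m\gamma_m^{-1}=(0,p^m)(p^m,0)(0,-p^m)=(e^{-p^m}p^m,0)=u_m^{\,e^{-p^m}},
\]
and then your own argument (the cocycle relation modulo $(u_m-1)V$) gives
\[
(c*\gamma_m)(u_m)\equiv e^{-p^m}\,c(u_m)*\gamma_m \pmod{(u_m-1)V},
\]
so the transported action is $e^{-p^m}\gamma_m$ on the nose. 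Taking invariants yields $(V/(u_m-1))^{e^{-p^m}\gamma_m=1}$ directly, without the (false) identification of the two kernels. Once you correct the conjugation direction, the rest of your argument goes through.
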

\begin{proof} C'est un cas particulier de \cite[proposition 1.7.7]{NSW}.
\end{proof}
Soit $\rP$ un groupe analytique $p$-adique. On dira que l'action de $\rP$ sur une $\Q_p$-représentation de dimension finie $V$ est analytique si pour tout $\gamma\in\rP$ et $v\in V$, la fonction $x\mapsto \gamma^xv=\sum_{n=0}^{+\infty}\binom{x}{n}(\gamma-1)^nv$ est une fonction analytique sur $\Z_p$ à valeurs dans $V$.
On dira qu'une $\Q_p$-représentation de dimension finie $V$ de $\rP_m$ est analytique si l'action de $\rP_m$ est analytique sur $V$. 

Soit $V$ une $\Q_p$-représentation analytique de $\rP_m$.
Si $\gamma\in\rP_m$, on peut d\'efinir une d\'erivation $\partial_{\gamma}:V\ra V$ par rapport \`a $\alpha_{\gamma}$ par la formule:
\[\partial_{\gamma}(x)=\lim_{n\ra\infty}\frac{x*\gamma^{p^n}-x}{p^{n}} .\]
En particulier, on note $\partial_{m,i}$, $i=1,2$, les dérivations par rapport à $(p^m,0)$ et $(0,p^m)$ respectivement.
\begin{lemma}\label{sio}Soit $V$ une représentation analytique de $U_{m}$ munie d'un $\Z_p$-réseau $T$ stable sous l'action de $U_{m}$. Supposons que $(u_m-1)T\subset p^2T$. Alors, on a un isomorprhisme 
\[ \rH^1(U_m,T)\cong T/\partial_{m,1}.\]
\end{lemma}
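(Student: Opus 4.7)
The plan is to exploit the fact that $U_m$ is topologically procyclic, so that $\rH^1(U_m, T)$ is computed by a two-term complex, and then to pass from the action of the topological generator $u_m := (p^m, 0)$ to the infinitesimal action $\partial_{m,1}$ via $p$-adic logarithm and exponential.

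First I would note that since $U_m \simeq \Z_p$ is topologically generated by $u_m$, continuous cohomology gives the canonical identification $\rH^1(U_m, T) \cong T/(u_m - 1)T$. It therefore suffices to establish the equality of $\Z_p$-submodules $(u_m - 1)T = \partial_{m,1}(T)$ inside $T$.

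The key point is that the hypothesis $(u_m - 1)T \subset p^2 T$ makes both the logarithm and exponential series converge as $\Z_p$-linear endomorphisms of $T$. Indeed, since $(u_m-1)^n T \subset p^{2n} T$ and $v_p(n) \leq \log_p n < 2n$ for $n \geq 1$, the series
\[
\log(u_m) \;:=\; \sum_{n \geq 1} (-1)^{n-1} \frac{(u_m-1)^n}{n}
\]
defines an endomorphism of $T$ with image in $p^2 T$. Computing $\lim_n (u_m^{p^n} - 1)/p^n$ term by term from the analytic expansion $u_m^x v = \sum_k \binom{x}{k}(u_m-1)^k v$ (using $\binom{p^n}{k}/p^n \to (-1)^{k-1}/k$ as $n \to \infty$) identifies this operator with $\partial_{m,1}$; in particular $\partial_{m,1}(T) \subset p^2 T$, and $\exp(\partial_{m,1}) = \sum_{n \geq 0} \partial_{m,1}^n / n!$ converges on $T$ and equals $u_m$.

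To conclude, I would factor
\[
u_m - 1 \;=\; \exp(\partial_{m,1}) - 1 \;=\; \partial_{m,1} \circ U, \qquad U := \sum_{n \geq 0} \frac{\partial_{m,1}^n}{(n+1)!}.
\]
Using Legendre's bound $v_p((n+1)!) \leq n/(p-1)$ together with $\partial_{m,1}^n(T) \subset p^{2n} T$, one sees that $(U - \id)(T) \subset pT$, so $U$ is a $\Z_p$-linear automorphism of $T$ by the geometric series. The displayed factorisation then gives $(u_m - 1)T = \partial_{m,1}(T)$, and the lemma follows. The main obstacle is really only careful bookkeeping of $p$-adic valuations: checking that both series stabilise $T$ and that $U$ is invertible on $T$ reduces to the elementary inequalities $2n - v_p(n) \geq 1$ and $2n - v_p((n+1)!) \geq 1$ for $n \geq 1$, both of which are immediate consequences of Legendre's formula.
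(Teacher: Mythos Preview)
Your proof is correct. The paper does not give an independent argument here but simply refers to \cite[lemme~4.8]{Wang}; your direct approach---identifying $\rH^1$ of the procyclic group $U_m$ with the cokernel of $u_m-1$, then factoring $u_m-1=\partial_{m,1}\circ U$ with $U$ a unit on $T$ via the $p$-adic $\log/\exp$ series---is the standard one and is in all likelihood exactly what the cited reference does.
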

\begin{proof}Ce lemme se démontre de la même manière que \cite[lemme 4.8]{Wang}.
\end{proof}
\begin{prop} \label{analytic}Soit $V$ une représentation analytique de $U_{m}$ munie d'un $\Z_p$-réseau $T$ stable sous l'action de $U_{m}$. Alors,\\
$(1)$ tout \'el\'ement de $ \rH^1(U_{m},T) $ est repr\'esentable par un $1$-cocycle analytique à un élément de $p^{m}$-torsion près;\\
$(2)$ l'image d'un $1$-cocycle analytique,
\[ u\mapsto c_{u}=\sum_{i\geq 1}c_i u^i,\]
sous l'isomorphisme $\rH^1(U_{m},T)\cong T/\partial_{m,1}$ est aussi celle de
$\delta^{(1)}(c_{u})=c_1$ dans
$T/\partial_{m,1}$  à un élément de $p^{m}$-torsion près.
\end{prop}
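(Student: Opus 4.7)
The plan starts from the observation that $U_m\cong \Z_p$, topologically generated by $u_m=(p^m,0)$. A continuous $1$-cocycle $c: U_m \to T$ is therefore determined by its value $t:=c(u_m)\in T$ via $c(u_m^n)=(1+u_m+\cdots+u_m^{n-1})t$, and two continuous cocycles are cohomologous iff their values at $u_m$ differ by an element of $(u_m-1)T$. Thus $\rH^1(U_m,T)=T/(u_m-1)T$, and the isomorphism of Lemma \ref{sio} rewrites this as $T/\partial_{m,1}T$ via the key identity
\[
u_m-1 \;=\; \exp(p^m\partial_{m,1})-1 \;=\; p^m\partial_{m,1}\cdot E, \qquad E:=\sum_{n\geq 0}\frac{(p^m\partial_{m,1})^n}{(n+1)!},
\]
where $E$ is a "unit" operator on $V$: the hypothesis $(u_m-1)T\subset p^2T$ together with $m\geq v_p(2p)$ ensures $p^m\partial_{m,1}$ acts topologically nilpotently modulo high powers of $p$, so $E$ and $E^{-1}$ are well defined as operators preserving $T$ up to $p^m$-torsion.

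\textbf{Part (1).} An analytic cocycle $c_{\mathrm{an}}\colon u_m^x \mapsto f(x)=\sum_{i\geq 1}c_i x^i$ must satisfy $f(x+y)=f(x)+u_m^x f(y)$. Differentiating at $y=0$ gives $f'(x)=u_m^x c_1$, hence, integrating,
\[
f(x) \;=\; \frac{u_m^x-1}{\log u_m}\,c_1 \;=\; \frac{u_m^x-1}{p^m\partial_{m,1}}\,c_1 \;=\; E\cdot c_1 \text{ at } x=1.
\]
So an analytic cocycle is determined by $c_1$ and satisfies $c_{\mathrm{an}}(u_m)=Ec_1$. To realize a given class $[t]$, I would set $c_1:=E^{-1}t$ (where $E^{-1}=\sum_{n\geq 0}(-1)^n (E-1)^n /\ldots$ is the inverse series, which converges up to $p^m$-torsion by the discussion above). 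Then $c_{\mathrm{an}}(u_m)=E E^{-1}t=t$ modulo a $p^m$-torsion element, so $c-c_{\mathrm{an}}$ is a coboundary up to $p^m$-torsion, proving (1).

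\textbf{Part (2).} For the analytic cocycle $c_{\mathrm{an}}(u_m^x)=\sum c_i x^i$, evaluating at $x=1$ gives
\[
c_{\mathrm{an}}(u_m) \;=\; E\,c_1 \;=\; c_1 \;+\; \sum_{n\geq 1}\frac{(p^m\partial_{m,1})^n}{(n+1)!}\,c_1.
\]
Every term in the right-hand sum is of the form $\partial_{m,1}(w)$ for some $w\in V$, and by the nilpotency argument above these are elements of $\partial_{m,1}T$ modulo $p^m$-torsion. Hence, under the identification $T/(u_m-1)T\cong T/\partial_{m,1}T$ from Lemma \ref{sio}, the class of $c_{\mathrm{an}}(u_m)$ is $[c_1]$ up to a $p^m$-torsion element, which is exactly the claim $\delta^{(1)}(c_u)=c_1$.

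\textbf{Main difficulty.} The routine part is the differentiation argument yielding the closed form $f(x)=\frac{u_m^x-1}{p^m\partial_{m,1}}c_1$; the real work, carried out in \cite[lemme 4.8]{Wang} whose method is invoked by Lemma \ref{sio}, is the careful bookkeeping of $p$-divisibilities. One has to show that the operators $E$ and $E^{-1}$, which a priori live only in $\mathrm{End}_{\Q_p}(V)$, actually send the lattice $T$ into itself modulo a $p^m$-torsion error, and similarly that the series $\sum \frac{(p^m\partial_{m,1})^n}{(n+1)!}c_1$ lies in $\partial_{m,1}T$ with a controlled error. This is where the hypothesis $(u_m-1)T\subset p^2 T$ combined with $m\geq v_p(2p)$ is essential: it provides the geometric decay of the terms $(p^m\partial_{m,1})^n c_1$ that overrides the denominators $(n+1)!$, exactly as in the analogous estimate for $\rH^1$ in Wang's earlier work.
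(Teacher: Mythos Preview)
Your argument follows the same route as the paper's (which simply invokes \cite[proposition~4.5]{Wang1} and \cite[théorème~4.1.10]{Wang}): exploit the $1$-cocycle relation on the procyclic group $U_m$, differentiate to obtain the closed form of an analytic cocycle, and compare with the identification $T/(u_m-1)\cong T/\partial_{m,1}$ via the factorization $u_m-1=\partial_{m,1}\cdot E$. The paper singles out exactly the same key point, namely that $\delta^{(1)}$ induces a surjection $\rH^{1,\mathrm{an}}(U_m,T)\to T/\partial_{m,1}$ up to $p^m$-torsion.

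One normalization slip to correct: with the paper's definition $\partial_{m,1}=\lim_{n\to\infty}(u_m^{p^n}-1)/p^n$ one has $u_m=\exp(\partial_{m,1})$, so $\log u_m=\partial_{m,1}$ (not $p^m\partial_{m,1}$) and the unit is $E=\sum_{n\geq 0}\partial_{m,1}^n/(n+1)!$ without the extra $p^m$. This does not change your strategy, but it does shift the source of the $p$-adic decay: the convergence of $E$, $E^{-1}$ and the inclusion $\sum_{n\geq 1}\frac{\partial_{m,1}^n}{(n+1)!}c_1\in\partial_{m,1}T$ up to bounded torsion now rest entirely on the hypothesis $(u_m-1)T\subset p^2T$ of Lemma~\ref{sio} (which forces $\partial_{m,1}T\subset p^2T$), rather than on a built-in $p^m$ prefactor. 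The bookkeeping you correctly flag as the main difficulty is precisely the content of the cited results in \cite{Wang,Wang1}.
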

\begin{proof}Cette proposition se démontre de la même manière que \cite[proposition 4.5]{Wang1} (voir aussi \cite[théorème 4.1.10]{Wang}), mais plus simplement.
Le point clé est d'utiliser la relation de $1$-cocycle pour montrer que l'application
\[\delta^{(1)}: \{ 1\text{-cocycle analytique}\}\ra T\] induit une surjection 
$\rH^{1,\text{an}}(U_m,T)\ra T/(\partial_{m,1})$ à un $p^m$-torsion près.

\end{proof}


\subsection{Construction d'une application  exponentielle duale}\label{constructiondeloi}

On note $\fK^{++}=\Z_p\{q/p\}$ l'anneau des entiers de $\fK^{+}$ pour la valuation $v_p$, ainsi que 
$\cK^{++}=\Z_p[[q/p]]$ son compl\'et\'e $q/p$-adique. 
On note $\fK_M^{++}$ l'anneau des entiers de $\fK_M^+$, qui est l'anneau
\[\{ \sum_{n=0}^{+\infty}a_nq_M^n\in \fK_M^+: a_n\in F_M \text{ tel que } v_p(a_n)+\frac{n}{M} \geq 0, \text{ pour tout } n  \},\]
et  on note $\cK_M^{++}$ son compl\'et\'e $q/p$-adique, 
ainsi que $\cK_M^+=\cK_M^{++}\otimes \Q_p$.

Rappelons que l'application $\iota_{\dR}:\fK^+\ra \B_{\dR}^+; f(q)\mapsto f(\tilde{q})$ identifie $\fK^+$ \`a un sous-anneau de $\B_{\dR}^+$. On note 
$\tilde{\fK}^+=\iota_{\dR}(\fK^+)[[t]]$ et $\tilde{\cK}^+=(\widehat{\iota_{\dR}(\fK^{++})}\otimes\Q_p)[[t]]$, où $\widehat{\iota_{\dR}(\fK^{++})}$ est le complété $\tilde{q}/p$-adique de $\iota_{\dR}(\fK^{++})$. De même, on note $\tilde{\fK}_M^+=\iota_{\dR}(\fK_M^+)[[t]]$ et $\tilde{\cK}_{M}^+=(\widehat{\iota_{\dR}(\fK_M^{++})}\otimes\Q_p)[[t]]$. 
On a 
\[\widehat{\iota_{\dR}(\fK_M^{++})}=\{ \sum_{n=0}^{+\infty}a_{n}\tilde{q}_M^n\in F_M[[\tilde{q}_M]]: a_{n}\in F_M \text{ tel que } v_p(a_{n})+\frac{n}{M} \geq 0\}.\]
On d\'efinit une application $\theta: \tilde{\cK}^{+}_M\ra \cK^{+}_M$ par r\'eduction modulo $t$, qui est compatible avec celle définie sur $\tilde{\fK}_M^{+}$.
On constate que $\tilde{\fK}_M^+$ est la limite projective $\plim_n(\tilde{\fK}_M^+/t^n)$, où les $\tilde{\fK}_M^+/t^n$ sont des $\fK^+$-modules de rang fini munis de la topologie définie par $v_p$. 

Posons $\tilde{V}=\plim_{n} (\tilde{\fK}_M^+/t^{n})\hat{\otimes} W_k(1)$ la $\Q_p$-représentation de $\rP_m$, qui n'est pas une $\Q_p$-repr\'esentation analytique, mais qui peut s'approximer par les representations analytiques $\tilde{T}_{n_1,n_2}$ définis ci-dessous: 
On note $T(1)=\Sym^{k-2}(\Z_pe_1\oplus \Z_pe_2) (1)$ la structure entière de $W_k(1)$. On note $\tilde{T}_{n_1}=(\widehat{\iota_{\dR}(\fK_M^{++})}\hat{\otimes}T(1)[[t]])/t^{n_1}$. C'est un $\Z_p$-réseau de  $(\tilde{\fK}_M^+/t^{n_1})\hat{\otimes} W_k(1)$ stable sous l'action de $\rP_m$.  On note $\m_{n_1}^{n_2}$ le sous $\Z_p$-module de 
$\tilde{T}_{n_1}$ des éléments avec $(\tilde{q}/p)$-adique valuation $\geq n_2$.
 Comme $\m_{n_1}^{n_2}$ est stable sous l'action de $\rP_m$, on peut définir les $\Z_p$-représentations analytiques $\tilde{T}_{n_1,n_2}$ de $\rP_m$, pour tous $n_1,n_2\geq 1$,
\[\tilde{T}_{n_1,n_2}=(\widehat{\iota_{\dR}(\fK_M^{++})}\hat{\otimes}T(1)[[t]])/t^{n_1})/\m_{n_1}^{n_2}.\]
  
L'inclusion $ \tilde{V}\subset\plim_{n_1}\left((\plim_{n_2} \tilde{T}_{n_1,n_2})\otimes\Q_p\right)$ de $\rP_m$-représentations, nous permet de définir un morphisme:
\[\rH^i(\rP_m, \tilde{V})\ra \plim_{n_1}\rH^i(\rP_m, (\plim_{n_2} \tilde{T}_{n_1,n_2})\otimes\Q_p) \ra \plim_{n_1}\left((\plim_{n_2}\rH^i(\rP_m,  \tilde{T}_{n_1,n_2}))\otimes\Q_p\right).  \]
\begin{lemma}\cite[lemme 4.10]{Wang1} Les actions de $\partial_{m,1}$ et $\partial_{m,2}-p^m$ sur $t$ et $\tilde{q}_M$ sont donn\'ees par les formules suivantes:
\begin{equation*}
\begin{split}
\partial_{m,1}(t)=0, &\partial_{m,1}(\tilde{q}_M)=\frac{p^mt}{M}\tilde{q}_M;
\partial_{m,2}(t)=p^mt,\partial_{m,2}(\tilde{q}_M)=0.\end{split}
\end{equation*}

\end{lemma}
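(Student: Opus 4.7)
Le plan est le suivant. Par définition même de $\partial_{m,i}$, le lemme se ramène à calculer l'action des générateurs topologiques $(p^m,0)$ et $(0,p^m)$ de $\rP_m\simeq P_{\fK_M}$ sur $t$ et $\tilde{q}_M$, puis à itérer cette action grâce à la loi de groupe $(u_1,v_1)(u_2,v_2)=(e^{v_2}u_1+u_2, v_1+v_2)$, qui fournit immédiatement $(p^m,0)^{p^n}=(p^{m+n},0)$ et $(0,p^m)^{p^n}=(0,p^{m+n})$, et à passer à la limite $p$-adique.

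Sur $t=\log[\varepsilon]\in\B_{\dR}^+$, le groupe de Galois agit via le caractère cyclotomique, qui correspond sous $P_{\fK_M}\simeq\rP_m$ au déterminant: $(u,v)*t=e^v\,t$. Par suite $(p^m,0)$ fixe $t$, donc $\partial_{m,1}(t)=0$, tandis que
\[
\partial_{m,2}(t)=\lim_{n\to\infty}\frac{(e^{p^{m+n}}-1)\,t}{p^n}=p^m\,t,
\]
puisque $e^{p^{m+n}}-1=p^{m+n}+O(p^{2(m+n)})$ et seul le terme linéaire survit à la limite.

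Sur $\tilde{q}_M\in\A_{\Inf}$, la théorie de Kummer donne $\sigma*\tilde{q}_M=[\varepsilon]^{c_{q_M}(\sigma)}\tilde{q}_M=\exp(c_{q_M}(\sigma)\,t)\,\tilde{q}_M$ dans $\B_{\dR}^+$, où $c_{q_M}$ est le cocycle de Kummer associé à $q_M$; la relation $q_M^M=q$ fournit $c_{q_M}=c_q/M$. Sous $P_{\fK_M}\simeq\rP_m$, le cocycle $c_q$ se lit par définition sur l'entrée $(1,2)$ de la matrice, d'où $(u,v)*\tilde{q}_M=\exp(ut/M)\,\tilde{q}_M$. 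Ainsi $(0,p^m)$ fixe $\tilde{q}_M$, ce qui donne $\partial_{m,2}(\tilde{q}_M)=0$, et
\[
\partial_{m,1}(\tilde{q}_M)=\lim_{n\to\infty}\frac{(\exp(p^{m+n}t/M)-1)\,\tilde{q}_M}{p^n}=\frac{p^m\,t}{M}\,\tilde{q}_M,
\]
par le même argument d'ordre de grandeur.

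Le seul point qui demande un peu d'attention est de suivre précisément la normalisation de l'isomorphisme $P_{\fK_M}\simeq\rP_m$, en particulier de vérifier que c'est bien le cocycle de Kummer pour $q$ (et non pour $q_M$) qui figure dans l'entrée $(1,2)$, ce qui engendre le facteur $1/M$ ci-dessus. Une fois cette convention fixée, les quatre formules se déduisent en parallèle par un calcul élémentaire d'analyse $p$-adique.
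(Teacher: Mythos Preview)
Your argument is correct. The paper does not actually prove this lemma: it is simply quoted from \cite[lemme~4.10]{Wang1}, so there is no ``paper's own proof'' to compare against. What you give is precisely the intended direct verification: one computes the action of a general element $(u,v)\in\rP_m$ on $t$ (via the cyclotomic character, i.e.\ the determinant $e^v$) and on $\tilde{q}_M$ (via the Kummer cocycle $c_{q_M}=c_q/M$, where $c_q((u,v))=u$ is the $(1,2)$-entry), and then takes the limit defining $\partial_{m,i}$.

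Two small remarks on points you flagged yourself. First, the division $c_q/M$ is legitimate on $\rP_m$ because $c_q((u,v))=u\in p^m\Z_p$ while $M=M'p^m$ with $M'\in\Z_p^\times$, so $u/M\in\Z_p$; this is worth saying explicitly. Second, the identification of $c_q$ with the $(1,2)$-entry is indeed the paper's convention: the Galois group of $\fK_M/\fK$ is described as $\bigl(\begin{smallmatrix}1&\Z/M\Z\\0&(\Z/M\Z)^*\end{smallmatrix}\bigr)$ with $\tilde{q}\sigma=\tilde{q}\tilde{\zeta}^{c_q(\sigma)}$, and passing to the projective limit over $p$-power levels yields exactly the matrix description of $\rP_m$ used here.
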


\begin{prop}\label{reskj}Si $v_p(M)=m\geq v_p(2p)$ , alors l'application 
\[f(q_M)\mapsto e_1^{k-2}  t f(\tilde{q}_M) \] induit un isomorphisme de $\cK_{M}^{+}$ sur $\plim_{n_1}\left((\plim_{n_2}(\tilde{T}_{n_1,n_2}/\partial_{m,1})^{\partial_{m,2}=p^m})\otimes \Q_p\right)$.
\end{prop}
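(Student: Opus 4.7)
First, I check that the formula $f(q_M)\mapsto e_1^{k-2}tf(\tilde q_M)$ defines a map into $(\tilde T_{n_1,n_2}/\partial_{m,1})^{\partial_{m,2}=p^m}$ for each pair $(n_1,n_2)$, compatibly with the inverse system. By the preceding lemma, $\partial_{m,2}(t)=p^m t$, $\partial_{m,2}(\tilde q_M)=0$, and $\partial_{m,2}(e_1)=0$, so
\[
\partial_{m,2}(e_1^{k-2}tf(\tilde q_M))=p^m\cdot e_1^{k-2}tf(\tilde q_M).
\]
The image therefore lies in the $p^m$-eigenspace of $\partial_{m,2}$, a fortiori in its image in the quotient by $\partial_{m,1}$, and the construction is compatible with the truncation by $t^{n_1}$ and by $\m_{n_1}^{n_2}$.

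The core of the proof is to show that this map becomes bijective after taking the projective limits and tensoring with $\Q_p$. The computational input is a bigrading of $\tilde T_{n_1,n_2}$ by $(b,c)$, where $b$ is the $e_2$-degree and $c$ the $t$-degree, together with the explicit formulas
\[
\partial_{m,1}(e_1^{k-2-b}e_2^bt^c\tilde q_M^n)=bp^m e_1^{k-1-b}e_2^{b-1}t^c\tilde q_M^n+\tfrac{np^m}{M}e_1^{k-2-b}e_2^bt^{c+1}\tilde q_M^n
\]
and $\partial_{m,2}(e_1^{k-2-b}e_2^bt^c\tilde q_M^n)=(b+c)p^m\cdot(\text{same})$ (the weight absorbing the $(1)$-twist). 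This exhibits $\partial_{m,1}$ as a sum of two pieces: one lowering $b$ by one and one raising $c$ by one.

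For surjectivity, given $\bar\xi\in(\tilde T_{n_1,n_2}/\partial_{m,1})^{\partial_{m,2}=p^m}$, I would lift to $\xi$ and successively reduce it modulo $\partial_{m,1}$, starting from the maximal $e_2$-degree and decreasing: applied to $e_1^{k-1-b}e_2^bt^c\tilde q_M^n$ with $b\geq 1$, the first part of $\partial_{m,1}$ yields a term of $e_2$-degree $b-1$, allowing one to kill the $(b-1)$-degree contribution of $\xi$ at the cost of a corrective $t^{c+1}$ term of $e_2$-degree $b$. This cascade increases $c$ but is controlled by the truncation $c<n_1$, so the process terminates in finitely many steps, leaving a sum $\sum_n\beta_n e_1^{k-2}t\tilde q_M^n$ (the only terms with $b=0$ and $c=1$ that survive the eigenvalue condition), matching the image of $f(q_M)=\sum_n\beta_n q_M^n$. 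Injectivity is dual: if $e_1^{k-2}tf(\tilde q_M)=\partial_{m,1}(\eta)$, writing $\eta$ in the bigraded basis and matching components with the explicit formula for $\partial_{m,1}$ yields a triangular linear system that forces $f=0$ modulo $p^m$-torsion, which disappears after inverting $p$.

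The main obstacle is to organize the inductive reduction on $(b,c)$ so that it both terminates and respects the $t$-adic and $\tilde q_M/p$-adic filtrations used to define $\tilde T_{n_1,n_2}$: the cascading $t^{c+1}$ term produces corrections at each step whose accumulation has to be bounded by $n_1$, and the denominators $n/M$ coming from $\partial_{m,1}(\tilde q_M^n)$ force the inversion of $p$ in order to solve the resulting linear systems cleanly. The argument parallels but genuinely extends the $k=2$ case in \cite[proposition 4.5]{Wang1}, where the $\Sym^{k-2}$ layer collapses to the trivial representation so that only the $c$-variable induction is needed.
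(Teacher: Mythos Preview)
Your overall strategy is the same as the paper's: both arguments isolate the $e_2$-degree-$0$ part of $\tilde T_{n_1,n_2}$ (the paper calls this submodule $\bM_{n_1,n_2}$), show that every class in $\tilde T_{n_1,n_2}/\partial_{m,1}$ is represented there up to bounded $p$-power torsion, and then use the eigenvalue computation for $\partial_{m,2}$ to pin down the $t$-degree to $1$. The paper organizes this as a factorization $\phi=\phi_1\circ\phi_0$ through $\bM_{n_1,n_2}$ rather than as a direct $(b,c)$-induction, but the content is identical.

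However, your formula for $\partial_{m,1}$ on the $W_k$-factor is reversed, and this is not harmless. With the paper's conventions $e_1*\gamma=ae_1+be_2$, $e_2*\gamma=ce_1+de_2$ and $u_m=(\begin{smallmatrix}1&p^m\\0&1\end{smallmatrix})$, one has $\partial_{m,1}(e_1)=p^me_2$ and $\partial_{m,1}(e_2)=0$, so $\partial_{m,1}$ \emph{raises} the $e_2$-degree:
\[
\partial_{m,1}\bigl(e_1^{k-2-b}e_2^{b}t^c\tilde q_M^{\,n}\bigr)=(k-2-b)\,p^m\,e_1^{k-3-b}e_2^{b+1}t^c\tilde q_M^{\,n}+\tfrac{np^m}{M}\,e_1^{k-2-b}e_2^{b}t^{c+1}\tilde q_M^{\,n}.
\]
With your formula (lowering $b$) one would have $\partial_{m,1}(e_1^{k-3}e_2\,t)=p^m e_1^{k-2}t$, so the image of the constant function $1\in\cK_M^+$, namely $e_1^{k-2}t$, would become $p^m$-torsion in $\tilde T_{n_1,n_2}/\partial_{m,1}$ and hence vanish after $\otimes\Q_p$: the map would not be injective. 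The correct direction is precisely what the paper exploits for injectivity (``il n'existe pas d'élément de $\tilde T_{n_1,n_2}$ tel que $\partial_{m,1}x\in\bM_{n_1,n_2}$''): since every nonzero $\partial_{m,1}x$ carries a component of $e_2$-degree $\geq 1$, no nonzero pure $e_1^{k-2}$-vector lies in $\im\partial_{m,1}$.

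Your surjectivity reduction survives once you correct the direction: to kill a term of $e_2$-degree $b\geq 1$, apply $\partial_{m,1}$ to a suitable element of $e_2$-degree $b-1$; the first summand produces the desired $b$-term and the second leaves a corrective term of $e_2$-degree $b-1$ and $t$-degree $c+1$; iterate until $b=0$ or $c\geq n_1$. This is exactly the paper's assertion that $\bM_{n_1,n_2}+\partial_{m,1}\tilde T_{n_1,n_2}\supset p^{m(n_1+1)}\tilde T_{n_1,n_2}$. (Incidentally, the element ``$e_1^{k-1-b}e_2^{b}$'' in your reduction step has total degree $k-1$ and does not lie in $\Sym^{k-2}V_p$; presumably you meant $e_1^{k-2-b}e_2^{b}$.)
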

\begin{proof}On note $\bM_{n_1,n_2}$ le sous-$\Z_p$-module de $\tilde{T}_{n_1,n_2}$ des éléments de la forme
\[ \sum\limits_{\substack{0\leq r\leq Mn_2-1;\\ 0\leq s\leq n_1-1}}a_{r,s}e_1^{k-2}\tilde{q}_M^rt^s  \text{ où } a_{r,s}\in F_M \text{ vérifie } v_p(a_{r,s})+\frac{r}{M}\geq 0 .\]
 On constate qu'il n'existe pas d'élément de $\tilde{T}_{n_1,n_2}$ tel que $\partial_{m,1} x$ appartienne à $\bM_{n_1,n_2}$. Par conséquent, l'application naturelle
\[\phi_1: \bM_{n_1,n_2}\ra \tilde{T}_{n_1,n_2}/\partial_{m,1} \] 
est injective.   
On vérifie facilement que le $\Z_p$-module $\bM_{n_1,n_2}+\partial_{m,1}\tilde{T}_{ n_1,n_2}$ contient $p^{m(n_1+1)}\tilde{T}_{ n_1,n_2}$, pour tout $n_1,n_2\geq 1$. Ceci implique que le conoyau $\coker\phi_1$ de $\phi_1$ est un $\Z_p$-module de $p^{m(n_1+1)}$-torsion.

D'autre part, comme on a  
\begin{equation}\label{partial2}(\partial_{m,2}-p^m)( e_1^{k-2} \tilde{q}_M^r t^s)=p^m(s-1)e_1^{k-2}\tilde{q}_M^rt^s,
\end{equation} 
le $\Z_p$-module $\bM_{n_1,n_2}$ est stable sous l'action de $\partial_{m,2}-p^m$.
Donc l'application $\phi_1$ induit une application injective, que l'on note encore par $\phi_1$,
\[\phi_1:\bM_{n_1,n_2}^{\partial_{m,2}=p^m}\ra( \tilde{T}_{n_1,n_2}/\partial_{m,1})^{\partial_{m,2}=p^m}.\]
De plus, son conoyau est un $\Z_p$-module de $p^{m(n_1+1)}$-torsion.

Si $n_2\geq 1$, on note $(\cK_{M}^{++})_{n_2}=(\cK_M^{++})/(q/p)^{n_2}$.
Pour tout $n_1\geq 1$, on dispose d'une application  $\phi_0: (\cK_M^{++})_{n_2}\ra \bM_{n_1,n_2}$ en envoyant $f(q_M)$ sur $f(\tilde{q}_M)e_1^{k-2}t$, qui est une injection. De la formule $(\ref{partial2})$ pour $s=1$, on déduit que $\phi_0$ induit une application injective, notée encore $\phi_0$,
 \[\phi_0:(\cK_M^{++})_{n_2}\ra\bM_{n_1,n_2}^{\partial_{m,2}=p^m}.\]
 En composant avec l'application $\phi_1$, on obtient une application injective 
  \[\phi=\phi_1\circ\phi_0: (\cK_M^{++})_{n_2}\ra (\tilde{T}_{n_1,n_2}/\partial_{m,1})^{\partial_{m,2}=p^m}, \]
  
 En prenant la limite projective sur $n_2$, on obtient une injection 
 \[\cK_M^{++} \ra \plim_{n_2}(\tilde{T}_{n_1,n_2}/\partial_{m,1})^{\partial_{m,2}=p^m} \text{ pour } n_1\geq 1 .\] 
 Il ne reste qu'à montrer la surjectivité de
  \[\cK^{+}_M\ra (\plim_{n_2}(\tilde{T}_{n_1,n_2}/\partial_{m,1})^{\partial_{m,2}=p^m})\otimes \Q_p.\] 
 Cela se ramène à montrer que les applications 
 \[\plim_{n_2}\phi_0: \cK^{+}_M\ra (\plim_{n_2}\bM_{n_1,n_2}^{\partial_{m,2}=p^m})\otimes\Q_p\] et 
 \[\plim_{n_2}\phi_1: (\plim_{n_2}\bM_{n_1,n_2}^{\partial_{m,2}=p^m})\otimes\Q_p\ra (\plim_{n_2}(\tilde{T}_{n_1,n_2}/\partial_{m,1})^{\partial_{m,2}=p^m})\otimes \Q_p \] 
 sont surjectives.  La surjectivité de $\plim_{n_2}\phi_0$ découle de la formule  $(\ref{partial2})$ et celle de $\plim_{n_2}\phi_1$ découle du fait que, pour tout $n_1\geq 1$, le conoyau de $\phi_1$ est de $p^{m(n_1+1)}$-torsion.  

\end{proof}

En composant les applications obtenues dans les paragraphes pr\'ec\'edents, on obtient le diagramme suivant:
\[
\xymatrix{
 \rH^1(  \cG_{\fK_{M}},\B_{\dR}^{+}\otimes  W_{k}(1)         ) &
\\
\rH^1(  P_{\fK_{M}},\B_{\dR}^{+}(\fK^+_{Mp^{\infty}})\otimes W_{k}(1)   ) \ar[u]^-{(1)}\ar[r]^-{(2)}\ar@{..>}[dd]^{\exp^{*}} &
\rH^1\left(  P_{\fK_{M}}, \tilde{V} \right)\ar[d]^-{(3)}
\\
 & \plim_{n_1}\left((\plim_{n_2}\rH^1( P_{\fK_{M}},\tilde{T}_{n_1,n_2}) ) \otimes_{\Z_p} \Q_p  \right) \ar[d]^-{(4)}
\\
\cK_{M}^+ &\plim_{n_1}\left((\plim_{n_2} (\tilde{T}_{n_1,n_2}/\partial_{m,1})^{\partial_{m,2}=p^m}\otimes _{\Z_p}\Q_p\right)\ar[l]^-{\cong}_-{(5)},
}
\]
o\`u\\
$\bullet$ l'application $(1)$, d'inflation, est injective car $(\B_{\dR}^{+})^{\cG_{\fK_{Mp^{\infty}}}}=\B_{\dR}^{+}(\fK_{Mp^{\infty}}^+)$ et $\cG_{\fK_{Mp^{\infty}}}$ agit trivialement sur $W_{k}(1)$;\\
$\bullet$ $(2)$ est l'isomorphisme induit par "la trace de Tate normalis\'ee" $\bR_M$ (cf. proposition $\ref{Tatenorm}$);\\
$\bullet$ $(3)$ est l'application naturelle induite par la projection ;\\
$\bullet$ $(4)$ est induite par l'application de restriction et l'isomorphisme du lemme $\ref{sio}$ ; \\
$\bullet$ $(5)$ est l'isomorphisme dans la proposition \ref{reskj}.\\

On d\'efinit l'application $\exp^{*}$  en composant les applications $(2),(3), (4), (5)$.
  \begin{recette}\label{res} Comme $\tilde{T}_{n_1,n_2}$ est une repr\'esentation analytique pour tout $n_1$ et $n_2$, l'application $(5)$ se calcule gr\^ace au théorème $\ref{analytic}$. Plus pr\'ecis\'ement, cela se fait comme suit:
 on d\'efinit une application $\res_{k}^{(n_1,n_2)}: \tilde{T}_{n_1}^{\partial_2=1}\ra \cK_{M}^+$ en composant la
projection\footnote{Comme $\partial_1 $ et $\partial_2$ se commutent, on peut prendre le sous-module fixé par $\partial_2$, ensuite prendre le quotient.} 
$\tilde{T}_{n_1}^{\partial_2=1}\ra (\tilde{T}_{n_1,n_2}/\partial_1)^{\partial_2=1}$ avec l'inverse de l'isomorphisme dans la proposition
$\ref{reskj}$. En prenant la limite projective sur $n_2$, on obtient un morphisme $\res_{k}^{(n_1)}:\tilde{V}_{k}^{\partial_2=1}\ra \cK_M^+$. Si $c=(c^{(n_1)})\in \plim \rH^1(U_{\fK_M},\tilde{T}_{n_1} )^{\partial_2=1}$ est repr\'esent\'e par une limite de $1$-cocycle analytique $\sigma\mapsto c^{(n_1)}_{\sigma}$ sur $U_{\fK_M}$ \`a valeurs dans $\tilde{T}_{n_1}^{\partial_2=1} $, alors l'image de $c$ dans $\cK_M^+$ est $\res_k(\delta^{(1)}(c))=\plim_{n_1}\res_{k}^{(n_1)}(\delta^{(1)}(c))$, o\`u $\delta^{(1)}$ est l'application d\'efinie dans la proposition $\ref{analytic}$.
\end{recette}
\subsection{Application à la classe d'Eisenstein $p$-adique}
\subsubsection{Énoncé du théorème principal}

Soit $M\geq 1$, $p|M$ et $A=(\alpha,\beta)$ avec $(\alpha, \beta)\in \{1,\cdots, M\}$ et $(\alpha,\beta)\notin p\Z^2$. On note $\psi_{M,A}$ la fonction caractéristique $1_{A+M\hat{\Z}^2}$. C'est une fonction invariante sous l'action de $\cG_{\fK_M}$. Par ailleurs, l'image de la classe d'Eisenstein $p$-adique $z_{u,\mathrm{Eis},\text{ét}}(k)$ sous l'application de localisation appartient à $\rH^1(\cG_{\fK_M}, \fD_{0}((\hat{\Z}^{(p)})^2, W_k(1)))$, et on a 
$\int\psi_{M,A}z_{u,\mathrm{Eis},\text{ét}}(k)\in \rH^1(\cG_{\fK_M}, W_k(1))$.
On note son image dans $\rH^1(\cG_{\fK_M}, \B_{\dR}^+(\fK^+_{Mp^{\infty}})\otimes W_k(1))$ par $z_{M,A}$. 

\begin{prop}\label{final}Pour toute paire $(M,A)$ ci-dessus et $v_p(M)\geq v_p(2p)$, on a 
\[ \exp^*(z_{M,A})= \frac{1}{(k-2)!}M^{k-2} F^{(k)}_{u,\frac{\alpha}{M},\frac{\beta}{M}}.\]
\end{prop}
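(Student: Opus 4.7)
The plan is to trace the construction of $z_{M,A}$ through each step of Recette \ref{res} and reduce the statement to an identity of $q$-expansions. By Remark \ref{compKings}, the class $\int \psi_{M,A}\, z_{u,\mathrm{Eis},\text{ét}}(k)$ is given by the projective limit
\[
\plim_n \sum_{\substack{(a,b) \equiv (\alpha,\beta) \bmod M \\ 1 \leq a,b \leq Mp^n}} (ae_1 + be_2)^{k-2}\,\delta\bigl(g_{u, a/Mp^n, b/Mp^n}\bigr),
\]
where $\delta$ denotes the Kummer map. After inflation to $\rH^1(\cG_{\fK_M}, \B_{\dR}^+(\fK^+_{Mp^\infty})\otimes W_k(1))$ and application of Tate's normalized trace $\bR_M$ (Proposition \ref{Tatenorm}), this gives a class in $\rH^1(P_{\fK_M},\tilde V)$ that, via the projection to the analytic approximations $\tilde T_{n_1,n_2}$, is represented by a limit of analytic $1$-cocycles of $U_m$ to which Proposition \ref{analytic} and Proposition \ref{reskj} apply.

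The key computation is the explicit form of the Kummer cocycle of a Siegel unit under $\iota_{\dR}$. For $g \in \cU(\overline{\Q})$, the Kummer cocycle sends $\sigma$ to $(\sigma(g^{1/p^n})/g^{1/p^n})_n \in \Z_p(1)$. For $\sigma = (u,0) \in U_m$, the Galois action on $\tilde q_M$ transported via $\iota_{\dR}$ is $\tilde q_M \mapsto \tilde q_M \exp(ut/M)$, so the cocycle becomes an analytic function of $u$ whose linear coefficient $\delta^{(1)}$ can be read off. Writing $g_{c,a,b}$ via the product formula for $\theta$ and differentiating in $u$, one identifies $\delta^{(1)}$ with $t\, e_1^{k-2}$ times an explicit $q$-expansion; after applying the Soulé twist $r_u = u^2 - \langle u \rangle$ (which kills the $c$-dependence) this produces a finite sum of Eisenstein-Kronecker series $F^{(k)}_{u, a/Mp^n, b/Mp^n}$.

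Summing the weighted contributions over $(a,b)$ and passing to the projective limit in $n$, the distribution relations for the $F^{(k)}_{\alpha,\beta}$ (cf. \cite[théorème 2.13]{Wang}) collapse the finite sums to
\[
\frac{M^{k-2}}{(k-2)!}\,F^{(k)}_{u,\alpha/M,\beta/M}(\tilde q_M)\,\cdot\,t\, e_1^{k-2}.
\]
Under the inverse of the isomorphism of Proposition \ref{reskj}, this element of the $(\partial_{m,2}=p^m)$-eigenspace of $\tilde T_{n_1,n_2}/\partial_{m,1}$ corresponds to $\frac{M^{k-2}}{(k-2)!}\, F^{(k)}_{u,\alpha/M,\beta/M} \in \cK_M^+$, which is the asserted formula.

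The principal difficulty lies in the second step: one must follow the Kummer class of the Siegel unit $g_{c,a,b}$ through $\iota_{\dR}$ and Tate's normalized trace, and verify that the linear term $\delta^{(1)}$ genuinely realizes the $q$-expansion of $F^{(k)}_{a,b}$. This is the $\rH^1$-analog of Kato's explicit reciprocity law for $\rH^2$ developed in \cite{Wang}; technically it is simpler in that no cup-product is involved and a single analytic cocycle suffices, but the bookkeeping of normalizations (especially the Soulé twist $r_u$, which also serves to regularize the $k=2$ case) and the verification that the trace $\bR_M$ is compatible with the $q$-expansion identification still require care.
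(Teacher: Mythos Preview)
Your overall strategy is correct and matches the paper's, but the middle computation is mis-identified in two places that matter.

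First, the effect of the Tate trace $\bR_M$ is not just a descent: applied to $\log\theta(\tilde q,\tilde q_{Mp^n}^a\tilde\zeta_{Mp^n}^b)$ it produces $p^{-n}\log\theta(\tilde q^{\,p^n},\tilde q_M^{\,a}\tilde\zeta_M^{\,b})$, i.e.\ the first argument is deformed from $\tilde q$ to $\tilde q^{\,p^n}$ while the second lands at level $M$. This is the lemma analogous to \cite[lemme 5.13]{Wang}, and without it the subsequent limit makes no sense.

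Second, and more seriously, $\delta^{(1)}$ of the Kummer cocycle of a single Siegel unit does \emph{not} give an $F^{(k)}$-series. A direct computation of the action of $(u,0)\in U_m$ on $\log\theta(\tilde q^{\,p^n},\tilde q_M^{\,a}\tilde\zeta_M^{\,b})$ yields
\[
\delta^{(1)}_{a,b}=\tfrac{at}{M}\,D_2\log\bigl(r_u\theta(\tilde q^{\,p^n},\tilde q_M^{\,a}\tilde\zeta_M^{\,b})\bigr)=\tfrac{at}{M}\,E_{u,1}\bigl(q^{p^n},q_M^{\,a}\zeta_M^{\,\beta}\bigr),
\]
i.e.\ an $E_1$-type (logarithmic derivative of $\theta$) series, not $F^{(k)}_{u,a/Mp^n,b/Mp^n}$. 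After applying $\res_k$ (which picks out the $e_1^{k-2}$-component) and the Soulé weight $(ae_1+be_2)^{k-2}$, each term contributes $\tfrac{a^{k-1}}{M}E_{u,1}(q^{p^n},q_M^{\,a}\zeta_M^{\,\beta})$. The passage to $F^{(k)}$ is then \emph{not} via the distribution relations of \cite[théorème 2.13]{Wang} but via the Riemann-sum identity \cite[lemme 5.18]{Wang}:
\[
\lim_{n\to\infty}\sum_{\substack{a\equiv\alpha\ [M]\\ 1\le a\le Mp^n}} a^{k-1}\,E_{u,1}\bigl(q^{p^n},q_M^{\,a}\zeta_M^{\,\beta}\bigr)=M^{k-1}\,F^{(k)}_{u,\alpha/M,\beta/M}.
\]
This is the genuine analytic input; the distribution relations for $F^{(k)}$ play no role at this step. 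Once you replace your second paragraph by these two computations, the argument goes through exactly as in the paper.
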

On peut condenser cet énoncé en l'énoncé suivant (avec des notations évidentes):
\begin{theo}\label{principal} Si $k\geq 2$ et si $u\in \Z_p^*$, on a l'égalité suivante dans $\rH^0(\Pi_\Q, \fD_{\alg}(X^{(p)}, \fK_\infty^+))$,
\[\exp^*(z_{u,\mathrm{Eis},\text{ét}}(k))= z_{u,\mathrm{Eis, dR}}(k).\]
\end{theo}
\subsubsection{Construction d'un $1$-cocycle}

Soit $a,b \in \{1,\cdots p^nM\}$ vérifiant: $(a,b)\equiv (\alpha,\beta)\mod M$. On note les fonctions caractéristiques $1_{(a+Mp^n\Z_p)\times(b+Mp^n\Z_p)}$ par $\psi^{(n)}_{a,b}$. Notons $U$ l'ouvert $(\alpha+M\Z_p)\times(\beta+M\Z_p)$ de $\Z_p^2$. 
On définit une mesure $\mu\in \rH^1(\cG_{\fK_M},\fD_0(U,\Z_p(1)))$ par la formule:
\[ \int\psi^{(n)}_{a,b}\mu=\int_{(a+Mp^n\hat{\Z})\times(b+Mp^n\hat{\Z})}z_{u,\mathrm{Siegel}}^{(p)}.\]
On a $z_{u,\mathrm{Eis},\text{ét}}(k)= \frac{1}{(k-2)!}(a_pe_1+b_pe_2)^{k-2}\otimes z_{u,\mathrm{Siegel}}^{(p)}$. Ceci implique que
\[z_{M,A}=\int\psi_{M,A}z_{u,\text{Eis,ét}}(k)=\int_U\frac{1}{(k-2)!}(a_pe_1+b_pe_2)^{k-2}\mu.\]

Soit $\Psi$ une base du $\Z$-module des fonctions localement constantes sur $U$ constituée de fonctions du type $\psi_{a,b}^{(n)}$, avec $n\in \N$ et $(a,b)$ comme ci-dessus. On définit une distribution algébrique $\mu_{\Psi}$ sur $U$ à valeurs dans $\B_{\dR}^{+}(\ol{\fK}^+)[u_q]$ avec $u_q=\log \tilde{q}$ par la formule:
si $\psi_{a,b}^{(n)}\in \Psi$, 
\[\int \psi_{a,b}^{(n)}\mu_{\Psi}= \log g_u(\tilde{q}, \tilde{q}_{Mp^n}^a\tilde{\z}_{Mp^n}^b),\]
où l'élément $g_{u}(\tilde{q}, \tilde{q}_{Mp^n}^a\tilde{\z}_{Mp^n}^b)$ est obtenu en remplaçant les variables $q_{Mp^n}$ et $\zeta_{Mp^n}$ de la fonction $g_{u, \frac{a}{Mp^n}, \frac{b}{Mp^n}}$ par $\tilde{q}_{Mp^n}$ et $\tilde{\z}_{Mp^n}$ respectivement.

On identifie $\Z_p(1)$ au sous-module de $\B_{\dR}^+$ via l'isomorphisme de $\Z_p[\cG_{\fK}]$-modules:
\[\log\circ [\cdot]: \Z_p[1]\ra \Z_pt .\]
\begin{lemma}\cite[lemme 5.10]{Wang}
Considérons l'application \[\rH^1(\cG_{\fK_M}, \fD_0(U,  \Z_p(1) ))\ra \rH^1(\cG_{\fK_M}, \fD_0(U,  t\B_{\dR}^+ )).\] Alors $\mu$ est représenté par le $1$-cocycle
$\gamma\mapsto \mu_{\Psi}*(\gamma-1)$,
qui est l'inflation d'un $1$-cocycle sur $P_{\Q_p}^{\cycl}=\Gal(\fK_\infty/\fK)$ à valeurs dans $\fD_0(U,   t\B_{\dR}^+(\fK_{Mp^\infty}^+))$.\end{lemma}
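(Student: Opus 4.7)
Le plan est de réduire l'assertion à un calcul sur chaque fonction caractéristique de base $\psi_{a,b}^{(n)}\in\Psi$, en explicitant la classe de Kummer définissant $\mu$ en coordonnées logarithmiques. En premier lieu, je réécrirais l'application de connexion de Kummer via le plongement $\log\circ[\cdot]:\Z_p(1)\hookrightarrow t\B_{\dR}^+$: concrètement, pour une unité $g\in\Q\otimes\cU(\overline{\Q})$ relevée en un élément $\tilde{g}\in(\B_{\dR}^+)^\times$ admettant un système compatible de racines $p^n$-ièmes, le cocycle de Kummer de $g$ s'écrit $\gamma\mapsto\log\gamma(\tilde{g})-\log\tilde{g}=(\gamma-1)\log\tilde{g}$. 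Appliqué à la distribution $z_{u,\mathrm{Siegel}}^{(p)}$ et à la base $\Psi$, ceci traduit $\int\psi_{a,b}^{(n)}\mu$ en la classe représentée par $\gamma\mapsto(\gamma-1)\log g_u(\tilde{q},\tilde{q}_{Mp^n}^a\tilde{\zeta}_{Mp^n}^b)$, en utilisant la formule $\int\psi_{a,b}^{(n)}z_{u,\mathrm{Siegel}}=g_{u,a/Mp^n,b/Mp^n}$ et l'opérateur $r_u$ qui commute à la théorie de Kummer.

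Ensuite, j'identifierais cette expression à $\int\psi_{a,b}^{(n)}(\mu_\Psi*(\gamma-1))$ à partir directement de la définition de $\mu_\Psi$. L'ingrédient clé est le produit infini explicite pour $\theta$ définissant $g_{u,\alpha,\beta}$: son logarithme se développe en un multiple de $u_q=\log\tilde{q}$ (provenant du facteur $q^{c^2/12}$) plus des sommes convergentes de $\log(1-q^kq_z^{\pm 1})$ dont les images par $\iota_{\dR}$ appartiennent à $\B_{\dR}^+(\fK_{Mp^\infty}^+)$. On obtient ainsi $\log g_u(\tilde{q},\tilde{q}_{Mp^n}^a\tilde{\zeta}_{Mp^n}^b)\in\B_{\dR}^+(\fK_{Mp^\infty}^+)[u_q]$, qui est par construction $\int\psi_{a,b}^{(n)}\mu_\Psi$. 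Comme $\Psi$ est une $\Z$-base de $\LC_c(U,\Z)$, la $\Z$-linéarité en $\psi$ étend l'identité à toute fonction localement constante sur $U$, d'où la première assertion du lemme.

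Pour l'assertion d'inflation, il s'agit de vérifier que $\mu_\Psi*(\gamma-1)$ s'annule sur $H:=\Gal(\overline{\fK}/\fK_\infty)$ et prend ses valeurs dans $t\B_{\dR}^+(\fK_{Mp^\infty}^+)$. Pour $\gamma\in H$: d'une part, les coefficients de $\log g_u(\tilde{q},\tilde{q}_{Mp^n}^a\tilde{\zeta}_{Mp^n}^b)$ dans son développement en puissances de $u_q$ appartiennent à $\B_{\dR}^+(\fK_{Mp^\infty}^+)\subset(\B_{\dR}^+)^H$; d'autre part, l'action sur $u_q$ est donnée par $(\gamma-1)u_q=c_q(\gamma)t$, où $c_q$ est le cocycle de Kummer de $q$, et celui-ci est nul sur $H$ puisque les racines $q_{Mp^n}$ appartiennent à $\fK_{Mp^\infty}\subset\fK_\infty$. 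Cela donne $\mu_\Psi*(\gamma-1)=0$ pour $\gamma\in H$, et le fait que les valeurs restantes soient dans $t\B_{\dR}^+(\fK_{Mp^\infty}^+)$ résulte de la même décomposition, le facteur $t$ provenant à nouveau de $(\gamma-1)u_q\in\Z_pt$ et des cocycles de Kummer des $\tilde{q}_{Mp^n}^a\tilde{\zeta}_{Mp^n}^b$.

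La principale difficulté anticipée est l'analyse de convergence permettant de donner un sens à $\log g_u(\tilde{q},\tilde{q}_{Mp^n}^a\tilde{\zeta}_{Mp^n}^b)$ comme élément de $\B_{\dR}^+(\fK_{Mp^\infty}^+)[u_q]$: il faut vérifier que le développement $\tilde{q}/p$-adique de ce logarithme converge modulo chaque puissance de $t$, et que la collection $(\int\psi_{a,b}^{(n)}\mu_\Psi)_{a,b,n}$ satisfait à des relations de compatibilité permettant d'en faire une distribution algébrique bien définie sur $\Psi$. Ces points devraient résulter de l'intégralité des $q$-développements des unités de Siegel combinée à la description explicite du complété $\widehat{\iota_{\dR}(\fK_{Mp^n}^{++})}$ rappelée dans les paragraphes précédents.
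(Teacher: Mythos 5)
Le papier ne donne aucune démonstration de ce lemme : il le cite tel quel de [Wang, lemme 5.10], de sorte qu'il n'y a pas de preuve interne à comparer. Votre argument suit la stratégie standard (et celle de la référence citée) : représentation du cobord de Kummer par $\gamma\mapsto \log\tilde{g}*(\gamma-1)$ via le plongement $\log\circ[\cdot]:\Z_p(1)\to t\B_{\dR}^+$ et le relèvement de Teichmüller des unités de Siegel, puis trivialité sur $\Gal(\ol{\fK}/\fK_\infty)$ puisque $\tilde{q}$, $\tilde{q}_{Mp^n}$, $\tilde{\z}_{Mp^n}$ et $u_q$ y sont fixes ; il est correct dans ses grandes lignes, et les seules vérifications non formelles sont bien celles que vous identifiez (convergence de $\log g_u(\tilde{q},\tilde{q}_{Mp^n}^a\tilde{\z}_{Mp^n}^b)$ dans $\B_{\dR}^+(\fK_{Mp^\infty}^+)[u_q]$ et compatibilité avec l'action, sur les fonctions tests comme sur les valeurs, dans la formule du connecteur).
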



\subsubsection{Descente de $\fK_{Mp^\infty}$ à $\fK_M$}
D'après ce qui précède, $z_{M,A}$ est la classe du $1$-cocycle 
\[\gamma\mapsto \int_U\frac{1}{(k-2)!}(a_pe_1+b_pe_2)^{k-2}(\mu_{\Psi}*(\gamma-1)),\]
qui est aussi la classe du $1$-cocycle par "la trace de Tate normalisée":
 \[\gamma\mapsto \bR_M\left(\int_U\frac{1}{(k-2)!}(a_pe_1+b_pe_2)^{k-2}(\mu_{\Psi}*(\gamma-1))\right).\]
 
 Le lemme suivant se démontre par un calcul facile, qui est un analogue de \cite[lemme 5.13]{Wang}. 
\begin{lemma}Si $(a,b)\notin p\Z_p^2$, alors on a 
\[\bR_M(\log(\theta(\tilde{q}, \tilde{q}_{Mp^n}^a\tilde{\z}_{Mp^n}^b)))= p^{-n}\log(\theta(\tilde{q}^{p^n}, \tilde{q}_{M}^a\tilde{\z}_{M}^b)).\]
\end{lemma}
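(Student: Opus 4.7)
The plan is to expand $\log\theta$ via its product formula and to apply $\bR_M$ termwise, using the characterization of $\bR_M$ as the $\tilde{\fK}_M^+$-linear projection that keeps only those monomials in $\tilde{q}_{Mp^n}$ and $\tilde{\z}_{Mp^n}$ whose two exponents are simultaneously divisible by $p^n$. Substituting $q=\tilde{q}$ and $q_z=\tilde{q}_{Mp^n}^a\tilde{\z}_{Mp^n}^b$ and taking the formal logarithm of the product formula for $\theta$ gives
\[
\log\theta(\tilde{q},\tilde{q}_{Mp^n}^a\tilde{\z}_{Mp^n}^b) = \tfrac{1}{12}\log\tilde{q} + \log(q_z^{1/2}-q_z^{-1/2}) + \sum_{k\geq 1}\bigl[\log(1-\tilde{q}^k\tilde{q}_{Mp^n}^a\tilde{\z}_{Mp^n}^b) + \log(1-\tilde{q}^k\tilde{q}_{Mp^n}^{-a}\tilde{\z}_{Mp^n}^{-b})\bigr].
\]

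The heart of the argument is the infinite-product part. Expanding each summand as $-\sum_{j\geq 1}j^{-1}x^j$ and using $\tilde{q}=\tilde{q}_{Mp^n}^{Mp^n}$, each monomial has the form $-j^{-1}\tilde{q}_{Mp^n}^{A}\tilde{\z}_{Mp^n}^{B}$ with $A=kjMp^n\pm ja$ and $B=\pm jb$. The retention condition $p^n\mid A$, $p^n\mid B$ reduces, since $p^n\mid kjMp^n$, to $p^n\mid ja$ and $p^n\mid jb$; the hypothesis $(a,b)\notin p\Z_p^2$ then forces $p^n\mid j$. Reindexing $j=p^nj'$ and using $\tilde{q}_{Mp^n}^{p^n}=\tilde{q}_M$ and $\tilde{\z}_{Mp^n}^{p^n}=\tilde{\z}_M$, the surviving double series reassembles to
\[
-\frac{1}{p^n}\sum_{j'\geq 1}\frac{1}{j'}\bigl(\tilde{q}^{p^nk}\tilde{q}_M^{\pm a}\tilde{\z}_M^{\pm b}\bigr)^{j'} = p^{-n}\log\bigl(1-\tilde{q}^{p^nk}\tilde{q}_M^{\pm a}\tilde{\z}_M^{\pm b}\bigr),
\]
which is exactly $p^{-n}$ times the corresponding factor in $\log\theta(\tilde{q}^{p^n},\tilde{q}_M^a\tilde{\z}_M^b)$.

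The two remaining pieces are handled directly. The leading term $\tfrac{1}{12}\log\tilde{q}=\tfrac{u_q}{12}$ lies in $\tilde{\fK}_M^+[u_q]$ and is fixed by $\bR_M$, equal to $p^{-n}\cdot\tfrac{1}{12}\log\tilde{q}^{p^n}$. The middle factor decomposes (up to a sign constant absorbed into the branch convention for $\log\theta$) as $-\tfrac{1}{2}\log q_z+\log(1-q_z)$: the $\log(1-q_z)$ summand is the $k=0$ case of the preceding computation, whereas $-\tfrac{1}{2}\log(\tilde{q}_{Mp^n}^a\tilde{\z}_{Mp^n}^b)=-\tfrac{p^{-n}}{2}\log(\tilde{q}_M^a\tilde{\z}_M^b)$ (from $\tilde{q}_{Mp^n}^{p^n}=\tilde{q}_M$ and its analogue for $\tilde{\z}$) lies in $\tilde{\fK}_M^+[u_q]$, is fixed by $\bR_M$, and matches $p^{-n}$ times the analogous contribution on the right. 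The main obstacle is to justify that these formal logarithmic manipulations are legitimate inside the appropriate topological ring: convergence of the product expansion in the $(\tilde{q}/p)$-adic and $t$-adic topologies on $\B_{\dR}^+[u_q]$ and the continuity of the extended $\bR_M$ on the resulting completion. Both are routine consequences of the construction of $\bR_M$ recalled before Proposition \ref{Tatenorm}, and the whole argument closely parallels \cite[lemme 5.13]{Wang}.
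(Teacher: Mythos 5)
Votre calcul est correct et correspond exactement au « calcul facile » auquel le texte renvoie (l'analogue de \cite[lemme 5.13]{Wang}) : développer $\log\theta$ via la formule du produit, appliquer $\bR_M$ terme à terme grâce à sa caractérisation sur les monômes $\tilde{q}_{Mp^n}^{A}\tilde{\z}_{Mp^n}^{B}$, et constater que l'hypothèse $(a,b)\notin p\Z_p^2$ force $p^n\mid j$, d'où le facteur $p^{-n}$ après réindexation. Rien à signaler.
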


\begin{prop} Si on note 
\[\log_{a,b}^{(n),\gamma}=\log((u^2-\langle u\rangle)\theta(\tilde{q}^{p^n}, \tilde{q}^a_M\tilde{\z}_M^b))*(\gamma-1),\]
alors $z_{M,A}$ est représentée par le $1$-cocycle
\[\gamma\mapsto\lim_{n\ra \infty} \frac{1}{(k-2)!p^n}\sum (ae_1+be_2)^{k-2}\log_{a,b}^{(n),\gamma} ,\]
la somme portant sur l'ensemble 
\[U^{(n)}=\{(a,b)\in \{1,\cdots Mp^n\}^2: a\equiv \alpha, b\equiv \beta\mod M\}.\]
\end{prop}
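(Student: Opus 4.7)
La stratégie combine la proposition précédente (qui représente $z_{M,A}$ par le $1$-cocycle $\gamma\mapsto \bR_M(\int_U \frac{1}{(k-2)!}(a_pe_1+b_pe_2)^{k-2}(\mu_{\Psi}*(\gamma-1)))$) avec une discrétisation en sommes de Riemann de l'intégrale, suivie d'une application du lemme précédent terme à terme.

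Plus précisément, pour $n\geq 0$ fixé, l'ouvert compact $U=(\alpha+M\Z_p)\times(\beta+M\Z_p)$ admet la partition finie constituée des cosets $(a+Mp^n\Z_p)\times(b+Mp^n\Z_p)$ pour $(a,b)\in U^{(n)}$. La fonction polynomiale $(a_p,b_p)\mapsto (a_pe_1+b_pe_2)^{k-2}/(k-2)!$ est constante modulo $p^n$ sur chaque coset, égale à $(ae_1+be_2)^{k-2}/(k-2)!$. La bornitude de la distribution $\mu_{\Psi}*(\gamma-1)$, qui prend ses valeurs dans $t\B_{\dR}^+(\fK_{Mp^\infty}^+)$, permet alors d'identifier l'intégrale à la limite quand $n\ra\infty$ des sommes finies
\[S_n(\gamma) = \sum_{(a,b)\in U^{(n)}}\frac{(ae_1+be_2)^{k-2}}{(k-2)!}\int\psi_{a,b}^{(n)}(\mu_{\Psi}*(\gamma-1)).\]

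Par définition de $\mu_{\Psi}$, chaque terme $\int\psi_{a,b}^{(n)}(\mu_{\Psi}*(\gamma-1))$ vaut $\log((u^2-\langle u\rangle)\theta(\tilde{q},\tilde{q}_{Mp^n}^a\tilde{\z}_{Mp^n}^b))*(\gamma-1)$, l'opérateur $u^2-\langle u\rangle$ agissant sur $\theta$ via son action sur les unités de Siegel. On applique ensuite $\bR_M$ à $S_n(\gamma)$: sa $\tilde{\fK}_M^+$-linéarité et sa commutation avec l'action de Galois permettent la permutation avec la somme finie et avec $*(\gamma-1)$. L'hypothèse $p\mid M$ et $(\alpha,\beta)\notin p\Z^2$ garantit $(a,b)\notin p\Z_p^2$ pour tout $(a,b)\in U^{(n)}$, d'où l'applicabilité du lemme précédent qui fait apparaître le facteur $p^{-n}$ et remplace les variables $\tilde{q}_{Mp^n}$, $\tilde{\z}_{Mp^n}$ par $\tilde{q}^{p^n}$, $\tilde{\z}_M$. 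Le cocycle annoncé suit en faisant tendre $n$ vers l'infini.

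L'obstacle principal est de justifier rigoureusement la convergence dans $\rH^1(\cG_{\fK_M},\B_{\dR}^+(\fK_{Mp^\infty}^+)\otimes W_k(1))$: la somme $S_n$ comporte $p^{2n}$ termes, l'erreur d'approximation Riemannienne y introduit un facteur $O(p^n)$, tandis que $\bR_M$ produit un facteur $p^{-n}$. L'exacte compensation de ces ordres provient des relations de distribution satisfaites par les unités de Siegel qui définissent $z_{u,\text{Siegel}}^{(p)}$, mais doit être quantifiée dans la topologie $p$-adique de $\B_{\dR}^+$ pour légitimer l'échange des limites et garantir l'existence même de la limite dans la cohomologie continue.
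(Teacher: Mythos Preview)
Your plan is correct and follows exactly the paper's approach: write the integral as a limit of Riemann sums over the partition $U^{(n)}$, commute $\bR_M$ with the sum and with $*(\gamma-1)$, and apply the preceding lemma term by term to produce the $p^{-n}$ factor.

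The ``obstacle principal'' you raise at the end, however, is not a genuine obstacle, and the paper does not address it because nothing needs to be quantified. The logic is purely formal: the integral $\int_U \tfrac{1}{(k-2)!}(a_pe_1+b_pe_2)^{k-2}(\mu_\Psi*(\gamma-1))$ already exists as $\lim_n S_n(\gamma)$ by definition of integration of a continuous function against a measure; then $\bR_M$ is continuous (it extends by continuity to $\B_{\dR}^+(\fK_{Mp^\infty}^+)$), so $\bR_M(\lim_n S_n(\gamma))=\lim_n \bR_M(S_n(\gamma))$; finally the lemma rewrites each $\bR_M(S_n(\gamma))$ as the explicit sum with the $p^{-n}$ factor. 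The convergence of the final expression is thus a \emph{consequence} of this chain of equalities, not a hypothesis to be verified by a separate counting of powers of $p$ or an appeal to distribution relations. Your discussion of $p^{2n}$ terms, $O(p^n)$ errors, and compensation is unnecessary and somewhat misleading---drop that paragraph and the proof is complete.
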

\begin{proof} $z_{M,A}$ est représentée par le $1$-cocycle
\[\gamma\mapsto \bR_M\left(\int_U\frac{1}{(k-2)!}(a_pe_1+b_pe_2)^{k-2}(\mu_{\Psi}*(\gamma-1))\right).\]
Par la définition de l'intégration sur $U$, on a 
\[\int_U\frac{1}{(k-2)!}(a_pe_1+b_pe_2)^{k-2}(\mu_{\Psi}*(\gamma-1))=\lim_{n\ra\infty} \sum_{(a,b)\in U^{(n)}} \frac{1}{(k-2)!}(ae_1+be_2)^{k-2}\int\psi_{a,b}^{(n)}(\mu_{\Psi}*(\gamma-1)).\]

Comme $\bR_M$ commute avec l'action de $P_{\fK_M}$, on a 
\begin{equation}
\begin{split}
&\bR_M\left(\int_U\frac{1}{(k-2)!}(a_pe_1+b_pe_2)^{k-2}(\mu_{\Psi}*(\gamma-1))\right)\\
&=\lim_{n\ra\infty} \sum_{(a,b)\in U^{(n)}} \frac{1}{(k-2)!}(ae_1+be_2)^{k-2}\bR_M\left(\int\psi_{a,b}^{(n)}(\mu_{\Psi}*(\gamma-1))\right)\\
&=\lim_{n\ra\infty} \sum_{(a,b)\in U^{(n)}} \frac{1}{(k-2)!}(ae_1+be_2)^{k-2} r_u\left(\bR_M( \log\theta(\tilde{q},\tilde{q}_{Mp^n}^{a}\tilde{\z}_{Mp^n}^b)* (\gamma-1) )   \right)\\
&=\lim_{n\ra\infty}\sum_{(a,b)\in U^{(n)}} \frac{1}{(k-2)!p^n}(ae_1+be_2)^{k-2}\log_{a,b}^{(n),\gamma}.
\end{split}
\end{equation}
\end{proof}
\subsubsection{Passage à l'algèbre de Lie}
Comme le $1$-cocycle $\gamma\mapsto\lim_{n\ra \infty} p^{-n}\sum \frac{1}{(k-2)!}(ae_1+be_2)^{k-2}\log_{a,b}^{(n),\gamma} $ est la limite de $1$-cocycles analytiques à valeurs dans $t\fK_{M}^+\otimes S_k$, on utilise les techniques différentielles pour calculer son image dans $\cK_M^+$.

Si $f(x_1,x_2)$ est une fonction en deux variables, on note $D_2$ l'opérateur $x_2\frac{d}{dx_2}$. Si $n\in \N$ et $a,b\in \Z$, on pose $f_{a,b}^{(n)}=f(\tilde{q}^{p^n}, \tilde{q}_M^a\tilde{\z}_M^b)$.
\begin{lemma}On note $\delta_{a,b}^{(1)}=\delta^{(1)}(\log_{a,b}^{(n),\gamma})$. On a 
\[\delta_{a,b}^{(1)}=\frac{at}{M}D_2\log(r_u\theta_{a,b}^{(n)}).\]
\end{lemma}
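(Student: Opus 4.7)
Le plan est d'appliquer la recette \ref{res}: puisque l'application $\sigma\mapsto \log_{a,b}^{(n),\sigma}$ est analytique en $\sigma=(s,0)\in U_m$ (l'action de Galois étant donnée par les formules exponentielles $(s,0)\cdot\tilde{q}_M=\tilde{q}_M\exp(st/M)$ et $(s,0)\cdot\tilde{\z}_M=\tilde{\z}_M$, étendues à $\tilde{q}^{p^n}=\tilde{q}_M^{Mp^n}$ par fonctorialité), la proposition \ref{analytic} identifie $\delta^{(1)}$ avec $\partial_{m,1}/p^m$ appliqué à l'élément $\log(r_u\theta(\tilde{q}^{p^n},\tilde{q}_M^a\tilde{\z}_M^b))$, modulo l'image de $\partial_{m,1}$ et à $p^m$-torsion près.

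On applique ensuite la règle de chaîne combinée au lemme précédent: comme $\partial_{m,1}(t)=0$ et $\tilde{\z}_M=\exp(t/M)$, on obtient $\partial_{m,1}(\tilde{\z}_M)=0$, tandis que $\partial_{m,1}(\tilde{q}_M)=\frac{p^mt}{M}\tilde{q}_M$ fournit
\[\partial_{m,1}(\tilde{q}_M^a\tilde{\z}_M^b)=\frac{ap^mt}{M}\tilde{q}_M^a\tilde{\z}_M^b.\]
Avec l'identité analogue $\partial_{m,1}(\tilde{q}^{p^n})=p^{n+m}t\,\tilde{q}^{p^n}$ (découlant de $\tilde{q}=\tilde{q}_M^M$), la règle de chaîne produit a priori deux contributions, qui après division par $p^m$ sont respectivement $\frac{at}{M}D_2\log(r_u\theta_{a,b}^{(n)})$ et $p^n t\,D_1\log(r_u\theta_{a,b}^{(n)})$.

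Le cœur de la démonstration consiste à montrer que la seconde contribution (le terme en $D_1$) est dans l'image de $\partial_{m,1}$ au sein de $\tilde{T}_{n_1,n_2}$, et s'annule donc dans le quotient $(\tilde{T}_{n_1,n_2}/\partial_{m,1})^{\partial_{m,2}=p^m}$ via lequel $\delta^{(1)}$ se factorise vers $\cK_M^+$ (proposition \ref{reskj}). Pour cela, on développe $D_1\log(r_u\theta_{a,b}^{(n)})$ en série formelle en $\tilde{q}_M$: comme $x_1=\tilde{q}^{p^n}=\tilde{q}_M^{Mp^n}$, tout monôme non constant issu de cette série est de la forme $t\cdot\tilde{q}_M^r$ avec $r\neq0$, et la relation $\partial_{m,1}(\tfrac{M}{rp^m}\tilde{q}_M^r)=t\,\tilde{q}_M^r$ place ces monômes dans l'image de $\partial_{m,1}$. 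Le terme éventuel constant en $\tilde{q}_M$ provient de la composante $\tfrac{1}{12}\log\tilde{q}^{p^n}$ de $\log\theta$, et doit être contrôlé en revenant au $1$-cocycle: le twist par $r_u=u^2-\langle u\rangle$ ramène cette contribution à une classe triviale modulo cobord, en utilisant que $\langle u\rangle$ agit trivialement sur $\tilde{q}$ via le caractère cyclotomique et que la différence $(u^2-1)$ produit un élément dans le noyau du morphisme vers le quotient pertinent.

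Le principal obstacle est ce dernier point: identifier précisément l'image de $D_1\log(r_u\theta_{a,b}^{(n)})$ dans la filtration par les puissances de $\tilde{q}_M/p$ et vérifier que la seule éventuelle obstruction (la constante issue de la composante exponentielle du $\theta$) est bien tuée par le twist $r_u$ au niveau du cocycle. Une fois cet argument mené à bien, l'assemblage des contributions restantes donne l'égalité $\delta_{a,b}^{(1)}=\frac{at}{M}D_2\log(r_u\theta_{a,b}^{(n)})$ annoncée.
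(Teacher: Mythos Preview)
Votre approche est plus scrupuleuse que celle du papier : la preuve de l'article se contente d'affirmer que l'action au premier ordre de $(u,v)\in\rP_m$ sur $f_{a,b}^{(n)}=f(\tilde q^{p^n},\tilde q_M^a\tilde\z_M^b)$ ne produit que des termes en $D_2$, alors que vous observez à juste titre que la première variable $\tilde q^{p^n}=\tilde q_M^{Mp^n}$ est elle aussi déplacée par $U_m$ (puisque $\partial_{m,1}(\tilde q_M)=\tfrac{p^mt}{M}\tilde q_M$), d'où un terme supplémentaire $p^n t\,D_1\log(r_u\theta_{a,b}^{(n)})$. Votre traitement de la partie non constante (en $\tilde q_M$) de ce terme est correct : chaque monôme $t\,\tilde q_M^r$ avec $r>0$ est bien dans l'image de $\partial_{m,1}$.

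La lacune se situe dans votre traitement du terme constant. La composante $\tfrac{1}{12}\log x_1$ de $\log\theta$ est indépendante de $x_2$ et n'est donc pas tuée par $r_u=u^2-\langle u\rangle$ : elle contribue $\tfrac{(u^2-1)p^n t}{12}$ à $\delta^{(1)}$, et cet élément n'appartient \emph{pas} à l'image de $\partial_{m,1}$ (le cocycle associé $(s,0)\mapsto\tfrac{(u^2-1)p^n st}{12}$ n'est pas non plus un cobord, sa primitive $\tfrac{(u^2-1)p^n}{12}u_q$ vivant hors de $\tilde\fK_M^+$). Ni l'argument elliptique du papier ni votre correction n'établissent donc le lemme comme une égalité exacte dans $T/\partial_{m,1}$ à $n$ fixé. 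Ce qui sauve le corollaire qui suit est que ce résidu, une fois multiplié par $(ae_1+be_2)^{k-2}$, sommé sur $U^{(n)}$ et divisé par $(k-2)!\,p^n$, contribue à $\res_k$ la constante $\tfrac{u^2-1}{12(k-2)!}\,p^n\!\sum_{a\equiv\alpha}a^{k-2}$ ; or $\sum_{a\equiv\alpha[M],\,1\le a\le Mp^n}a^{k-2}$ est de valuation $p$-adique $\ge n-O_k(1)$, si bien que l'expression totale est de valuation $\ge 2n-O_k(1)$ et s'annule quand $n\to\infty$. Autrement dit, l'énoncé du lemme doit se lire modulo un terme en $O(p^n)$ qui disparaît dans le passage à la limite.
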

\begin{proof}Soit $f_{a,b}^{(n)}$ définie comme ci-dessus. Alors l'action de $(u,v)\in \rP_m$ sur $f_{a,b}^{(n)}$ est 
\[(u,v)f_{a,b}^{(n)}= f(\tilde{q}^{p^n}, \tilde{q}_M^a\tilde{\z}_M^b)+u\frac{at}{M}D_2f_{a,b}^{(n)}+v\frac{bt}{M}D_2f_{a,b}^{(n)}+O((u,v)^2);\] 
donc l'action de $(\begin{smallmatrix} 1& u\\ 0 &e^v\end{smallmatrix})-1$ sur $f_{a,b}^{(n)}$ est donnée par la formule:
\begin{equation}
f_{a,b}^{(n)}*((\begin{smallmatrix} 1& u\\ 0 &e^v\end{smallmatrix})-1)= \frac{au+bv}{M}tD_2f_{a,b}^{(n)}+O((u,v)^2).
\end{equation}
On déduit de la définition de l'application $\delta^{(1)}$ que $\delta^{(1)}_{a,b}=\frac{at}{M}D_2\log(r_u\theta_{a,b}^{(n)})$.
\end{proof}
Le corollaire suivant nous permet de terminer la démonstration de la proposition \ref{final}.
\begin{coro}On a 
\begin{equation}
\begin{split}\mathrm{res}_k\left(\lim\limits_{n\ra \infty} \frac{1}{(k-2)!p^{n}}\sum (ae_1+be_2)^{k-2}\delta_{a,b}^{(1)}  \right)=\frac{1}{(k-2)!}M^{k-2} F_{u,\frac{\alpha}{M},\frac{\beta}{M}}^{(k)}.
\end{split}
\end{equation}
\end{coro}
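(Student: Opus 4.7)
La preuve est un calcul direct déroulant la recette \ref{res} sur le $1$-cocycle $\gamma\mapsto \lim_n \frac{1}{(k-2)!p^n}\sum (ae_1+be_2)^{k-2}\log_{a,b}^{(n),\gamma}$ qui représente $z_{M,A}$. Par le lemme précédent, son image par $\delta^{(1)}$ est $\lim_n \frac{1}{(k-2)!p^n}\sum (ae_1+be_2)^{k-2}\delta_{a,b}^{(1)}$, avec $\delta_{a,b}^{(1)} = \frac{at}{M}D_2\log(r_u\theta_{a,b}^{(n)})$. Il reste à calculer l'image de cet élément par les étapes $(3)$--$(5)$ de la construction de $\exp^*$: projection dans $\tilde{T}_{n_1,n_2}/\partial_{m,1}$, restriction à l'espace propre $\partial_{m,2}=p^m$, puis inverse de l'isomorphisme $\phi$ de la proposition \ref{reskj}.

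Le cœur du calcul est une analyse des valeurs propres de $\partial_{m,2}$. À partir des formules $\partial_{m,2}(e_1) = 0$, $\partial_{m,2}(e_2) = p^m e_2$, $\partial_{m,2}(t) = p^m t$ et $\partial_{m,2}(\tilde{q}_M) = 0$, on constate que le monôme $e_1^j e_2^{k-2-j}\,t$ est vecteur propre de $\partial_{m,2}$ de valeur propre $p^m(k-1-j)$. En développant $(ae_1+be_2)^{k-2} = \sum_{j} \binom{k-2}{j} a^j b^{k-2-j}\,e_1^j e_2^{k-2-j}$, seul le terme $j=k-2$, de coefficient $a^{k-2}e_1^{k-2}$, survit dans l'espace propre $\partial_{m,2} = p^m$ après la réduction modulo $\partial_{m,1}$ (les deux dérivations commutent, et les autres termes se réduisent, modulo $\partial_{m,1}$, à des multiples de $e_1^{k-2}t^{k-1-j}$ avec $j<k-2$, dont la valeur propre sous $\partial_{m,2}$ est strictement supérieure à $p^m$). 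L'isomorphisme $\phi$, qui envoie $e_1^{k-2}\,t\,f(\tilde{q}_M)$ sur $f(q_M)$, fournit alors l'identité
\[
\res_k\left(\lim_n \frac{1}{(k-2)!p^n}\sum (ae_1+be_2)^{k-2}\delta_{a,b}^{(1)}\right) = \lim_n \frac{1}{(k-2)!p^nM}\sum_{(a,b)\in U^{(n)}} a^{k-1}\,D_2\log(r_u\theta)(q^{p^n}, q_M^a\zeta_M^b).
\]

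Il reste alors à identifier cette limite avec $\frac{1}{(k-2)!}M^{k-2}F^{(k)}_{u,\alpha/M,\beta/M}$. Cela repose sur la formule classique $D_2\log\theta(\tau,z) = E_1(\tau,z)$ (calcul direct sur le produit infini définissant $\theta$, cf.\ \S\ref{EK}) et sur une identité de distribution exprimant $F^{(k)}_{\alpha/M,\beta/M}$ comme une moyenne pondérée par $a^{k-1}$ des valeurs de $E_1$ aux points de $Mp^n$-torsion; la correction par l'opérateur $r_u = u^2-\langle u\rangle$ donne alors $F^{(k)}_u$ des deux côtés, conformément à la définition dans la proposition \ref{eiskj}. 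On peut également procéder conceptuellement en invoquant l'unicité dans la proposition \ref{eiskj} de la distribution algébrique caractérisée par ses intégrales sur les $(a+r\hat{\Z})\times(b+r\hat{\Z})$: la somme de droite hérite, via le formalisme du paragraphe \ref{gcd} et des relations de distribution pour les unités de Siegel, d'une relation de distribution analogue, ce qui force l'égalité.

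La principale difficulté réside dans cette dernière étape d'identification avec $F^{(k)}_u$: elle exige soit un calcul technique de $q$-développements (via les formules de Lipschitz pour les séries de Kronecker), soit une vérification soigneuse de la relation de distribution satisfaite par la somme aux points de $Mp^n$-torsion. Les étapes précédentes sont en revanche essentiellement formelles et résultent directement des propriétés des dérivations $\partial_{m,i}$ établies au \S\ref{tech}.
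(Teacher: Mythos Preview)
Your approach is correct and essentially the same as the paper's. The paper also applies $\res_k$ via the recipe, extracts the $a^{k-2}e_1^{k-2}$ term (it simply invokes ``par définition de $\res_k$'' where you unpack the eigenvalue analysis), and arrives at the same intermediate expression. Two small points of comparison: first, the paper explicitly collapses the double sum over $(a,b)\in U^{(n)}$ to a single sum over $a$ by observing that $\zeta_M^b=\zeta_M^\beta$ (since $b\equiv\beta\pmod M$), the sum over $b$ then contributing a factor $p^n$ that cancels the remaining $p^{-n}$; second, for the final identification the paper takes the concrete route you describe, quoting directly the formula
\[
\lim_{n\to\infty}\sum_{\substack{a\equiv\alpha[M]\\ 1\leq a\leq Mp^n}} a^{r}E_{u,1}(q^{p^n},q_M^a\zeta_M^\beta)=M^{r}F^{(r+1)}_{u,\alpha/M,\beta/M}
\]
from \cite[lemme 5.18]{Wang}, applied with $r=k-1$. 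Your alternative suggestion via the uniqueness in Proposition~\ref{eiskj} is not the route taken in the paper.
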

\begin{proof}Par la définition de l'application $\mathrm{res}_k$ (cf. recette \ref{res}), on a 
\[\mathrm{res}_k\left(\lim\limits_{n\ra \infty} \frac{1}{(k-2)!p^{n}}\sum_{\substack{a\equiv\alpha [M]\\
b\equiv\beta[M]\\ 1\leq a,b\leq Mp^n}} (ae_1+be_2)^{k-2}\delta_{a,b}^{(1)}  \right)= \lim\limits_{n\ra \infty} \sum\frac{1}{(k-2)!p^{n}}\frac{a^{k-1}}{M}D_2\log(r_u\theta_{a,b}^{(n)}).\]
Comme $\partial_zE_{u,\alpha,\beta}^{(k)}=E_{u,\alpha,\beta}^{k+1}$ pour $k\geq 0$, on a 
\[D_2\log(r_u\theta_{a,b}^{(n)})= E_{u,1}(q^{p^n}, q_M^a\z_M^b)=E_{u,1}(q^{p^n}, q_M^a\z_M^\beta).\]
Ceci implique que
\[\mathrm{res}_k\left(\lim\limits_{n\ra \infty} \frac{1}{(k-2)!p^{n}}\sum_{\substack{a\equiv\alpha [M]\\
b\equiv\beta[M]\\ 1\leq a,b\leq Mp^n}} (ae_1+be_2)^{k-2}\delta_{a,b}^{(1)}  \right)=\lim\limits_{n\ra \infty} \sum_{\substack{a\equiv\alpha [M]\\
1\leq a\leq Mp^n}}\frac{1}{(k-2)!}\frac{a^{k-1}}{M}E_{u,1}(q^{p^n}, q_M^a\z_M^\beta).\]
Par ailleurs, on a la formule (cf. \cite[lemme 5.18]{Wang}) suivante:
\[ \lim\limits_{n\ra \infty} \sum_{\substack{a\equiv\alpha [M]\\ 1\leq a\leq Mp^n}} a^rE_{u,1}(q^{p^n}, q_M^{a}\z_M^\beta)=M^rF_{u,\alpha/M,\beta/M}^{(r+1)}.\]
Ceci permet de conclure le corollaire.

\end{proof}

--------------------------------------

\noindent Francesco Lemma,\\
Université Paris 7 - Denis Diderot\\
Institut de Mathématiques de Jussieu - Paris Rive Gauche\\
UMR7586\\
Bâtiment Sophie Germain\\
Case 7012\\
75205 PARIS Cedex 13\\
France\\
Courrier électronique : lemma@math.univ-paris-diderot.fr\\
--------------------------------------\\
\noindent Shanwen Wang\\
Institut de mathématiques de Jussieu, 4 place Jussieu, 75005 Paris, France \\ Courrier électronique : wetiron1984@gmail.com 
\end{document}